\documentclass[11pt]{amsart}

\usepackage[a4paper,top=2cm,bottom=2cm,left=2cm,right=2cm]{geometry}

\usepackage[T1]{fontenc}
\usepackage{amsmath,amssymb,amsthm}
\usepackage{graphicx}
\usepackage{caption}
\usepackage{subcaption}
\usepackage{float}
\usepackage{hyperref}
\usepackage{placeins}
\usepackage{amsmath}
\usepackage{float}

\newcommand{\Ds}{\displaystyle}

\newtheorem{lemma}{Lemma}[section]

\newtheorem{theorem}{Theorem}[section]

\newtheorem{remark}{Remark}[section]

\theoremstyle{definition}

\numberwithin{equation}{section}

\title[Periodic-to-Localized Convergence in 2D Saturable DNLS]
{Ground States and Periodic--to--Localized Convergence
in Two--Dimensional Saturable Discrete Nonlinear Schr\"odinger Equations}

\author{Vassilios M Rothos}
\address{School of Mechanical Engineering, Faculty of Engineering\\
Aristotle University of Thessaloniki\\
Thessaloniki 54124, Greece}
\email{rothos@auth.gr}

\thanks{Accepted for publication in \textit{Applicable Analysis}.}
\thanks{
The research was funded by Aristotle University of Thessaloniki (AUTH) Research Council grant numbers 73191, 73699, 11682.}

\subjclass[2020]{35Q55, 37K40, 39A12, 35B35}

\keywords{discrete nonlinear Schr\"odinger equation,
saturable nonlinearity,
ground states,
Nehari manifold,
orbital stability,
strong convergence}
\date{\today}

\dedicatory{}

\begin{document}
\begin{abstract}
We study a two--dimensional discrete nonlinear Schr\"odinger equation with saturable nonlinearity on the lattice $\mathbb Z^2$. Using a variational approach based on the Nehari manifold, we establish the existence of nontrivial periodic ground states on finite lattices and establish the existence of exponentially localized ground states in $\ell^2(\mathbb Z^2)$. A principal result is the rigorous passage from periodic to localized states: we show that, up to lattice translations, periodic ground states converge strongly in $\ell^2(\mathbb Z^2)$ to a localized ground state as the lattice periods tend to infinity. The analysis combines variational methods, spectral properties of the discrete Laplacian, and concentration--compactness techniques adapted to the two--dimensional discrete setting. We further derive qualitative properties of the resulting solutions, including positivity and exponential localization, and establish a conditional orbital stability result within the Grillakis--Shatah--Strauss framework. Numerical computations illustrate the theoretical results and confirm the predicted convergence and localization behavior.

\end{abstract}
\maketitle
\section{Introduction}
\label{sec1}

Discrete nonlinear Schr\"odinger (DNLS) equations constitute a fundamental
class of lattice models describing nonlinear wave propagation in spatially
discrete media. They arise naturally in a variety of physical settings,
including nonlinear optical waveguide arrays, photorefractive crystals,
and Bose--Einstein condensates trapped in optical lattices; see, for
example, the monograph of Kevrekidis \cite{Kevrekidis2009} and the survey
of Kevrekidis and Pelinovsky \cite{KevrekidisPelinovsky2006}.
The interplay between lattice discreteness and nonlinearity leads to the
formation of spatially localized coherent structures, commonly known as
discrete solitons or intrinsic localized modes.

Among the nonlinear responses considered in the DNLS framework,
saturable nonlinearities play a particularly important role.
Unlike polynomial nonlinearities, they remain bounded for large
amplitudes and therefore provide a more realistic description of
photorefractive and optical media.
From the analytical point of view, saturable nonlinearities are
asymptotically linear and nonpolynomial, which makes the associated
variational problems substantially more delicate.
Relevant physical and mathematical studies include
Christodoulides, Lederer and Silberberg \cite{Christodoulides2003},
Had\v{z}ievski et al.~\cite{Hadzievski2004},
Maluckov et al.~\cite{Maluckov2008}, Melvi et al.~\cite{MelvinChampneysKevrekidisCuevas2006,MelvinChampneysKevrekidisCuevas2008} 
and the more recent work of Alfimov et al.~\cite{Alfimov2019}.

Variational methods have proved particularly effective for the study of
localized states in DNLS lattices. 
In the one--dimensional setting, Pankov and Rothos
\cite{PankovRothos2008} developed a Nehari manifold approach for DNLS
equations with saturable nonlinearity and established the existence of
periodic and spatially localized solutions.
Related variational approaches for nonlinear lattice systems may also be
found in Herrmann \cite{Herrmann2010}.
However, a corresponding rigorous theory for two--dimensional saturable
DNLS lattices appears to be largely unexplored.

The purpose of the present work is to extend the variational framework of
\cite{PankovRothos2008} to the two--dimensional lattice $\mathbb Z^2$.
This extension is nontrivial.
Passing from $\mathbb Z$ to $\mathbb Z^2$ enlarges the spectral band of
the discrete Laplacian from $[0,4]$ to $[0,8]$, introduces two
independent translation directions, and significantly complicates the
compactness analysis required for the infinite--lattice limit.
As a consequence, a refined concentration--compactness argument is needed
to control minimizing sequences and exclude loss of mass through lattice
translations.

We consider the two--dimensional DNLS equation with saturable
nonlinearity
\[
i\dot{\psi}_{n,m}
+\Delta\psi_{n,m}
+\frac{\nu|\psi_{n,m}|^2}{1+\mu|\psi_{n,m}|^2}\psi_{n,m}
=0,
\qquad (n,m)\in\mathbb Z^2,
\]
and seek standing wave solutions of the form
$\psi_{n,m}(t)=e^{-i\omega t}u_{n,m}$.
This leads to a stationary lattice equation that admits a natural
variational formulation on $\ell^2(\mathbb Z^2)$.

The analysis is based on the Nehari manifold associated with the action
functional.
Within this framework we first establish the existence of periodic ground
states on finite lattices with arbitrary periods.
We then pass to the infinite--lattice limit and establish the existence of an
exponentially localized ground state on $\mathbb Z^2$.
The principal contribution of the paper is a rigorous periodic-to-localized
convergence theorem showing that, up to lattice translations, periodic
ground states converge strongly in $\ell^2(\mathbb Z^2)$ to a localized
ground state as the lattice periods tend to infinity.
The proof combines variational methods, spectral properties of the
discrete Laplacian, and concentration--compactness techniques adapted to
the two--dimensional lattice setting.

In addition, we establish qualitative properties of the resulting ground
states, including positivity and exponential localization.
We also derive a conditional orbital stability result for the
corresponding standing waves within the
Grillakis--Shatah--Strauss framework \cite{r19,r19a}.
The analytical results are complemented by numerical computations that
illustrate the structure of the ground states and the convergence of
periodic approximations toward the localized infinite--lattice profile.

To the best of our knowledge, this is the first rigorous variational analysis 
establishing periodic--to--localized convergence for a two-dimensional DNLS equation with saturable nonlinearity.

\medskip
\noindent
\textbf{Organization of the paper.}
Section~\ref{sec2} introduces the mathematical formulation of the
problem and the associated variational setting.
Section~\ref{sec3} states the main results.
Section~\ref{sec4} contains the spectral and technical preliminaries
required in the two--dimensional setting.
Sections~\ref{sec5}--\ref{sec7} develop the Nehari manifold framework
and establish the existence of periodic and localized solutions.
Section~\ref{sec8} proves the strong convergence of periodic ground
states.
Section~\ref{sec9} discusses orbital stability.
Finally, Section~\ref{sec10} presents numerical computations
illustrating the analytical results.
\section{Problem set--up}\label{sec2}

We consider the two--dimensional discrete nonlinear Schr\"odinger equation with
saturable nonlinearity posed on the square lattice $\mathbb Z^2$,
\begin{equation}\label{eq:2DDNLS}
i \dot{\psi}_{n,m}
+ \psi_{n+1,m} + \psi_{n-1,m}
+ \psi_{n,m+1} + \psi_{n,m-1}
- 4\psi_{n,m}
+ \frac{\nu |\psi_{n,m}|^2}{1+\mu |\psi_{n,m}|^2}\psi_{n,m}
=0,
\end{equation}
where $(n,m)\in\mathbb Z^2$, $\mu>0$, and $\nu\neq 0$.
Equation \eqref{eq:2DDNLS} arises, for instance, as a tight--binding
approximation describing optical wave propagation in two--dimensional
nonlinear waveguide arrays with saturable response, and constitutes a
prototypical example of a discrete Schr\"odinger equation with
asymptotically linear nonlinearity.

We seek standing wave solutions of the form
\begin{equation}\label{eq:standingwave}
\psi_{n,m}(t)=e^{-i\omega t}u_{n,m},
\end{equation}
where $\omega\in\mathbb R$ and $u_{n,m}\in\mathbb R$.
Substitution of \eqref{eq:standingwave} into \eqref{eq:2DDNLS}
leads to the stationary equation
\begin{equation}\label{eq:stationary2D}
-\Delta u_{n,m}-\omega u_{n,m}=f(u_{n,m}),
\qquad (n,m)\in\mathbb Z^2,
\end{equation}
where the discrete Laplacian is given by
\begin{equation}\label{eq:laplacian}
\Delta u_{n,m}
=
u_{n+1,m}+u_{n-1,m}
+u_{n,m+1}+u_{n,m-1}
-4u_{n,m},
\end{equation}
and the nonlinearity has the saturable form
\begin{equation}\label{eq:saturablenonlinearity}
f(u)=\frac{\nu u^3}{1+\mu u^2}.
\end{equation}

Throughout the paper we work in the Hilbert space
$\ell^2(\mathbb Z^2)$ endowed with the standard inner product
\[
(u,v)
=
\sum_{(n,m)\in\mathbb Z^2}
u_{n,m}v_{n,m},
\]
and norm
\[
\|u\|_2^2
=
\sum_{(n,m)\in\mathbb Z^2}
|u_{n,m}|^2.
\]

The discrete Laplacian $-\Delta$ defines a bounded self--adjoint operator on
$\ell^2(\mathbb Z^2)$ whose spectrum is purely absolutely continuous and given by
\begin{equation}\label{eq:spectrum}
\sigma(-\Delta)=[0,8].
\end{equation}

Accordingly, we introduce the linear operator
\begin{equation}\label{eq:linearoperator}
L=-\Delta-\omega.
\end{equation}
The operator $L$ is invertible whenever
\begin{equation}\label{eq:gapcondition}
\omega\notin[0,8].
\end{equation}
In particular, localized standing waves are expected to exist only for
frequencies belonging to the spectral gaps
$\Ds
\omega\in(-\infty,0)\cup(8,\infty).
$

\begin{remark}\label{rem:staggering}
The two spectral gaps are related through the classical staggering
transformation
\[
u_{n,m}=(-1)^{n+m}v_{n,m},
\]
which maps solutions below the phonon band to solutions above it.
Indeed, substituting this transformation into the stationary equation
shows that frequencies in the semi--infinite gap $(-\infty,0)$ correspond
to frequencies in the upper spectral gap $(8,\infty)$. For this reason,
the analysis developed below is formulated in a unified manner so as to
cover both spectral regimes.
\end{remark}

The nonlinearity $f$ is of class $C^1(\mathbb R)$, odd, and asymptotically linear,
with
\begin{equation}\label{eq:asymptoticlinearity}
\lim_{|u|\to\infty}\frac{f(u)}{u}
=
\frac{\nu}{\mu}
=:l,
\end{equation}
and satisfies
\begin{equation}\label{eq:origin}
f(u)=o(u),
\qquad u\to0.
\end{equation}
Hence the nonlinearity is superlinear near the origin and approaches a
linear response for large amplitudes. These properties place
\eqref{eq:stationary2D} within the class of discrete Schr\"odinger
equations with asymptotically linear nonlinearities and make possible a
variational treatment.

Let
\begin{equation}\label{eq:primitive}
F(s)
=
\int_0^s f(\tau)\,d\tau
=
\frac{\nu}{2\mu^2}
\Big(
\mu s^2-\ln(1+\mu s^2)
\Big).
\end{equation}

For $u$ in the real Hilbert space $\ell^2(\mathbb Z^2)$, we define the associated action functional
\begin{equation}\label{eq:functional}
J(u)
=
\frac12(Lu,u)
-\sum_{(n,m)\in\mathbb Z^2}F(u_{n,m}).
\end{equation}
Critical points of $J$ correspond to $\ell^2(\mathbb Z^2)$ solutions
of the stationary equation \eqref{eq:stationary2D}. Moreover,
$J\in C^1(\ell^2(\mathbb Z^2),\mathbb R)$ and its Euler--Lagrange
equation is precisely \eqref{eq:stationary2D}.

Since $J$ is generally unbounded from above and below on
$\ell^2(\mathbb Z^2)$, we restrict the functional to the associated
Nehari manifold
\begin{equation}\label{eq:nehari}
\mathcal N
=
\Big\{
u\in\ell^2(\mathbb Z^2)\setminus\{0\}
:
\langle J'(u),u\rangle=0
\Big\}.
\end{equation}
The Nehari constraint provides a natural variational setting in which
nontrivial critical points may be characterized as constrained minimizers.

Ground states are defined as minimizers of $J$ on $\mathcal N$.
Their construction proceeds through a sequence of finite periodic lattice
problems. More precisely, for each lattice period $N$ we consider the
corresponding variational problem under periodic boundary conditions,
establish the existence of periodic ground states, and subsequently
investigate the limit as $N\to\infty$.

The principal objective is to show that, under suitable assumptions on
the frequency $\omega$ and on the asymptotic slope
$l=\nu/\mu$, the resulting sequence of periodic ground states converges
to a nontrivial spatially localized solution of
\eqref{eq:stationary2D}. The limiting profile is shown to decay
exponentially and to minimize the infinite--lattice variational problem.
This periodic approximation procedure therefore provides a rigorous
construction of localized ground states on $\mathbb Z^2$ from finite
periodic lattices.

The variational framework introduced above forms the basis for the
analysis developed in the remainder of the paper. In the next section we
state the main results concerning the existence of periodic ground
states, the existence of exponentially localized solutions on the
infinite lattice, and the convergence of periodic approximations as the
lattice periods tend to infinity.
\section{Main results}\label{sec3}

The main results of this paper establish the existence of periodic
ground states on finite periodic lattices, the existence of exponentially
localized ground states on the infinite lattice $\mathbb Z^2$, and a
global convergence result linking these two settings.

More precisely, we establish that:

\begin{itemize}
\item nontrivial periodic ground states exist on finite periodic lattices;
\item a spatially localized ground state exists on the infinite lattice $\mathbb Z^2$;
\item periodic ground states converge, up to lattice translations, to a localized
ground state as the lattice periods tend to infinity.
\end{itemize}

To formulate the results in a general framework, we consider the stationary
lattice equation
\begin{equation}\label{eq:main2D}
-\Delta u_{n,m}-\omega u_{n,m}
=
f(u_{n,m}),
\qquad
(n,m)\in\mathbb Z^2,
\end{equation}
where $\omega\in\mathbb R$ and $f:\mathbb R\to\mathbb R$ satisfies the
following assumptions.

Let
\begin{equation}\label{eq:Fdef}
F(t)
=
\int_0^t f(s)\,ds.
\end{equation}

\begin{itemize}
\item[(h1)]
$f(t)=o(t)$ as $t\to0$;

\item[(h2)]
$\Ds
\lim_{t\to\pm\infty}\frac{f(t)}{t}
=
l
<
\infty;
$

\item[(h3)]
$f\in C^1(\mathbb R)$ and
$\Ds
f(t)t<f'(t)t^2,
$ $t\neq0;$

\item[(h4)]
$\Ds
\frac12f(t)t-F(t)\to\infty,
$ $\Ds
t\to\pm\infty.
$
\end{itemize}

We now introduce the functional framework.
Let $k_1,k_2>1$ be integers and define
\[
Q_{k_1,k_2}
=
\left\{
(n,m)\in\mathbb Z^2:
-\Bigl[\frac{k_1}{2}\Bigr]
\le n\le
k_1-\Bigl[\frac{k_1}{2}\Bigr]-1,
\;
-\Bigl[\frac{k_2}{2}\Bigr]
\le m\le
k_2-\Bigl[\frac{k_2}{2}\Bigr]-1
\right\}.
\]

We denote by $X_{k_1,k_2}$ the space of all
$(k_1,k_2)$--periodic sequences.
Since $Q_{k_1,k_2}$ is finite,
$X_{k_1,k_2}$ is a finite--dimensional Hilbert space endowed with the norm
\[
\|u\|_{k_1,k_2}
=
\left(
\sum_{(n,m)\in Q_{k_1,k_2}}
|u_{n,m}|^2
\right)^{1/2}.
\]

All $\ell^p$ norms are equivalent on $X_{k_1,k_2}$.
On the infinite lattice we consider
$\Ds
X=\ell^2(\mathbb Z^2),
$
equipped with the norm
\[
\|u\|
=
\left(
\sum_{(n,m)\in\mathbb Z^2}
|u_{n,m}|^2
\right)^{1/2}.
\]

Let

\begin{equation}\label{eq:Loperator}
L=-\Delta-\omega.
\end{equation}

We use the same notation for the operator acting either on
$X_{k_1,k_2}$ or on $X$.
In both settings, $L$ is bounded and self--adjoint.

The corresponding action functionals are

\begin{equation}\label{eq:Jk2D}
J_{k_1,k_2}(u)
=
\frac12(Lu,u)_{k_1,k_2}
-
\sum_{(n,m)\in Q_{k_1,k_2}}
F(u_{n,m}),
\end{equation}

and

\begin{equation}\label{eq:J2D}
J(u)
=
\frac12(Lu,u)
-
\sum_{(n,m)\in\mathbb Z^2}
F(u_{n,m}),
\end{equation}

defined on $X_{k_1,k_2}$ and $X$, respectively.

Critical points of $J_{k_1,k_2}$ correspond to periodic solutions of
\eqref{eq:main2D}, while critical points of $J$ correspond to
$\ell^2(\mathbb Z^2)$ solutions of the infinite--lattice problem.

Throughout the remainder of the paper we assume that

\begin{equation}\label{eq:spectral_assumption}
0\notin\sigma(L),
\qquad
l\notin\sigma(L).
\end{equation}

Since
$\Ds
\sigma(-\Delta)=[0,8],
$

the first condition in \eqref{eq:spectral_assumption} is equivalent to
$\Ds
\omega\in(-\infty,0)\cup(8,\infty).
$
In the lower spectral gap $\omega<0$, we shall repeatedly use the
spectral crossing condition
$\Ds
l>\inf\sigma(L)=-\omega.
$
Moreover, assumption \eqref{eq:spectral_assumption} ensures that
$\Ds
l\notin\sigma(L),
$
thereby excluding resonance between the asymptotic slope of the
nonlinearity and the spectrum of the linearized operator at infinity.

We are now in a position to state the principal results of the paper.

\begin{theorem}[Existence of periodic ground states]
\label{thm:periodic_ground_states}

Assume that {\rm(h1)--(h4)} hold and that
\eqref{eq:spectral_assumption} is satisfied.
Then for every pair of integers $k_1,k_2>1$,
equation \eqref{eq:main2D} admits a nontrivial
$(k_1,k_2)$--periodic ground state solution
\[
u^{(k_1,k_2)}
\in
X_{k_1,k_2}.
\]
If, in addition, $f$ is odd, then the set of ground states contains
a pair of solutions $\pm u^{(k_1,k_2)}$. Moreover, one may choose
a representative which is nonnegative.
\end{theorem}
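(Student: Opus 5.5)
The plan is a Nehari manifold minimization on the finite-dimensional space $X_{k_1,k_2}$, in the spirit of \cite{PankovRothos2008}. Set
\[
\mathcal N_{k_1,k_2}=\{u\in X_{k_1,k_2}\setminus\{0\}:\ \langle J_{k_1,k_2}'(u),u\rangle=0\},
\qquad
c_{k_1,k_2}=\inf_{\mathcal N_{k_1,k_2}}J_{k_1,k_2}.
\]
Here $\langle J'(u),u\rangle=(Lu,u)-\sum f(u_{n,m})u_{n,m}$.

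\textbf{Step 1: the fibering maps.} For $u\neq0$, consider $\varphi(t)=J_{k_1,k_2}(tu)$ for $t>0$.
\begin{itemize}
\item By (h3), the map $t\mapsto f(tu_{n,m})/(tu_{n,m})$ is strictly increasing wherever $u_{n,m}\ne0$.
\item Hence $\varphi'(t)/t=(Lu,u)-\sum \frac{f(tu_{n,m})}{tu_{n,m}}u_{n,m}^2$ is strictly decreasing in $t$.
\item By (h1) its limit as $t\to0$ is $(Lu,u)$; by (h2) its limit as $t\to\infty$ is $(Lu,u)-l\|u\|^2$.
\end{itemize}
So the ray $\{tu\}$ meets $\mathcal N_{k_1,k_2}$ exactly once, at a global maximum of $\varphi$, precisely when $(Lu,u)>0>(Lu,u)-l\|u\|^2$. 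This set of admissible $u$ is nonempty. In the gap $\omega<0$, where $L>0$, use $l>-\omega=\inf\sigma(L)$ and take $u$ constant: then $(Lu,u)=-\omega\|u\|^2$. In the upper gap, apply the staggering of Remark \ref{rem:staggering}; it requires $k_1,k_2$ even, otherwise one uses the correct corner of the periodic spectrum. In either case $\mathcal N_{k_1,k_2}\neq\emptyset$.

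\textbf{Step 2: $c_{k_1,k_2}>0$.} By (h1) and norm equivalence, $\sum f(u)u=o(\|u\|^2)$ near zero. In the case $L\ge\alpha>0$ this gives $\|u\|\ge\rho>0$ on $\mathcal N_{k_1,k_2}$. Condition (h3) also gives $\tfrac12 f(t)t-F(t)>0$ for $t\ne0$. Hence
\[
J=\sum\Big(\tfrac12 f(u)u-F(u)\Big)\ \text{ on }\mathcal N_{k_1,k_2},
\]
and this is bounded below by a positive constant on the set $\{\|u\|\ge\rho\}\cap\mathcal N_{k_1,k_2}$, by continuity and positivity. The sign-indefinite case is reduced to the same setting by the transformation of Remark \ref{rem:staggering}.

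\textbf{Step 3: compactness of minimizing sequences (main obstacle).} Take $u_j\in\mathcal N_{k_1,k_2}$ with $J(u_j)\to c_{k_1,k_2}$. The key is boundedness. If $\|u_j\|\to\infty$, then some coordinate is unbounded, since the dimension is finite. By (h4), $\sum(\tfrac12 f(u)u-F(u))\to\infty$, contradicting $J(u_j)\to c$. This is where (h4) replaces the Ambrosetti--Rabinowitz condition. Then pass to a convergent subsequence $u_j\to u$. By Step 2, $\|u\|\ge\rho$, so $u\neq0$ and $u\in\mathcal N_{k_1,k_2}$, with $J(u)=c_{k_1,k_2}$.

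\textbf{Step 4: the minimizer is a critical point.} Argue by contradiction. If $J'(u)\neq0$, deform $u$ slightly in a direction of decrease. Then project back to $\mathcal N_{k_1,k_2}$ using the unique maximizing $t$ from Step 1, which depends continuously on the point. This yields a value below $c_{k_1,k_2}$, which is impossible. Equivalently, one can use that $\mathcal N_{k_1,k_2}$ is a $C^1$ natural constraint: $\langle G'(u),u\rangle<0$ by (h3), where $G(u)=\langle J'(u),u\rangle$, and then apply Lagrange multipliers.

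\textbf{Step 5: symmetry and sign.} If $f$ is odd, then $F$ is even. So $J$ and $\mathcal N_{k_1,k_2}$ are invariant under $u\mapsto-u$, and $-u$ is also a ground state. For nonnegativity, replace $u$ by $|u|$. We have $(L|u|,|u|)\le(Lu,u)$, since the off-diagonal coupling terms satisfy $-2u_iu_k\ge-2|u_i||u_k|$ only in the favorable direction for $-\Delta$. Meanwhile $\sum F$ and $\sum f(u)u$ are unchanged. Then
\[
J(t|u|)\le J(tu)\quad\text{for all } t>0,
\]
and the projection $t_*|u|$ of $|u|$ onto $\mathcal N_{k_1,k_2}$ satisfies
\[
J(t_*|u|)\le \max_t J(tu)=J(u)=c_{k_1,k_2}.
\]
So $t_*|u|$ is a nonnegative ground state. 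In the upper gap the same argument is applied after staggering.
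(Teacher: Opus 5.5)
For $\omega<0$ your argument is the paper's proof. It minimizes on $\mathcal N_{k_1,k_2}$ in the finite-dimensional space $X_{k_1,k_2}$ and bounds $\|u\|_{\ell^\infty}$ from below using (h1). It bounds minimizing sequences using (h4) through $J=\sum(\tfrac12 f(u)u-F(u))$ on the Nehari manifold. It gets criticality from the natural-constraint argument via $\langle I'_{k_1,k_2}(u),u\rangle<0$, and nonnegativity by replacing $u$ with the fibering projection of $|u|$. Your comparison $J(t|u|)\le J(tu)$ for all $t>0$ is a slightly tidier version of the paper's $t^*\in(0,1]$ argument. Like the paper, you use the crossing condition $l>-\omega$, which is not implied by \eqref{eq:spectral_assumption}. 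It cannot be dropped: for $\omega<0$ and $l<-\omega$, every $u\neq0$ satisfies $\sum f(u_{n,m})u_{n,m}<l\|u\|^2<-\omega\|u\|^2\le(Lu,u)$, so $\mathcal N_{k_1,k_2}=\emptyset$.

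The step that fails is your reduction of the upper gap $\omega>8$ by staggering, which the paper also asserts. For odd $f$, the substitution $u_{n,m}=(-1)^{n+m}v_{n,m}$ turns $-\Delta u-\omega u=f(u)$ into $-\Delta v-(8-\omega)v=-f(v)$. This is the lower-gap problem with nonlinearity $-f$, which violates (h3), so it is not the case you treated, whatever the parity of $k_1,k_2$.

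In fact the upper-gap statement is false under these hypotheses. Conditions (h1) and (h3) force $f(t)/t$ to increase in $|t|$ from $0$, so $f(t)t>0$ for $t\neq0$. For $\omega>8$ we have $\sigma(L)\subset[-\omega,8-\omega]\subset(-\infty,0)$, also on $X_{k_1,k_2}$, so $L$ is negative definite. Hence $\langle J'_{k_1,k_2}(u),u\rangle<0$ for every $u\neq0$, and no nontrivial solution exists. For the same reason there is no ``sign-indefinite'' case to reduce: with constant coefficients, $L$ is either positive or negative definite.

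So your proof, like the paper's, establishes the theorem only for $\omega<0$ with $l>-\omega$. That restriction should be stated explicitly rather than delegated to Remark~\ref{rem:staggering}.
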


\begin{theorem}[Existence of localized ground states]
\label{thm:localized_ground_state}

Assume that {\rm(h1)--(h4)} and
\eqref{eq:spectral_assumption} hold.
Then equation \eqref{eq:main2D} possesses a nontrivial
ground state solution
$\Ds
u\in\ell^2(\mathbb Z^2).
$
Moreover, there exist constants $C>0$ and $\alpha>0$
such that

\begin{equation}\label{eq:expdecay}
|u_{n,m}|
\le
C e^{-\alpha |(n,m)|},
\qquad
(n,m)\in\mathbb Z^2.
\end{equation}
If $f$ is odd, then the set of ground states contains a pair of
solutions $\pm u$. Moreover, one may choose a nonnegative ground state.
\end{theorem}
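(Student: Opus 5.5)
The plan is to obtain the localized ground state as a limit of the periodic ground states of Theorem~\ref{thm:periodic_ground_states}, following the Pankov--Rothos scheme adapted to $\mathbb Z^2$. Let $c_{k}=\inf_{\mathcal N_{k}}J_{k}$ be the periodic ground state levels for $k=(k_1,k_2)$, and $c=\inf_{\mathcal N}J$. The first step is a set of \emph{uniform bounds}. Using (h1)--(h3) and $0\notin\sigma(L)$, I will show $\mathcal N_k$ and $\mathcal N$ stay a uniform distance from $0$. This rests on $|(Lu,u)|\ge\delta\|u\|^2$ on the relevant spectral subspace together with $\sum f(u_{n,m})u_{n,m}=o(\|u\|^2)$, which follows from $\|u\|_\infty\le\|u\|_2$. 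It gives $c_k\ge\alpha>0$ uniformly in $k$.

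Next I will bound $c_k\le C$ uniformly. Since $l>\inf\sigma(L)$, there is a finitely supported $w$ with $(Lw,w)<l\|w\|^2$. Asymptotic linearity (h2) then forces $J(tw)\to-\infty$ along the ray (suitably combined with the negative spectral part when $\omega>8$). Hence the projection of $w$ onto $\mathcal N_k$ is bounded independently of $k$ once the support of $w$ fits in $Q_k$. With bounded levels, (h4) and the non-resonance $l\notin\sigma(L)$ give boundedness of the ground states $u^{(k)}$ in $\ell^2(Q_k)$. I will argue by contradiction: if $\|u^{(k)}\|\to\infty$, normalize $v^{(k)}=u^{(k)}/\|u^{(k)}\|$. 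If $v^{(k)}$ vanishes in $\ell^\infty$, the identity $J_k(u)-\tfrac12\langle J_k'(u),u\rangle=\sum(\tfrac12 fu-F)$ contradicts boundedness of $c_k$ via (h4). Otherwise, after translation, $v^{(k)}\to v\ne0$ solves $Lv=l v$ on its support region, contradicting $l\notin\sigma(L)$.

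The core step is \emph{non-vanishing and passage to the limit}. From $\langle J_k'(u^{(k)}),u^{(k)}\rangle=0$ and $c_k\ge\alpha$, I get $\sum f(u^{(k)})u^{(k)}\ge\beta>0$. Combined with (h1) and the $\ell^2$ bound, this yields $\|u^{(k)}\|_\infty\ge\eta>0$. By periodicity I translate a maximizing site to the origin, and extract a pointwise (hence weak $\ell^2$ on compacts, via a diagonal argument) limit $u$ with $|u_{0,0}|\ge\eta$, so $u\ne0$. Because the equation is local (nearest neighbours), $u$ solves \eqref{eq:main2D} on $\mathbb Z^2$. Fatou's lemma applied to $\tfrac12 f(u)u-F(u)\ge0$, which is a consequence of (h3), gives $J(u)\le\liminf c_k$. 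I also have $\limsup c_k\le c$, by truncating a near-minimizer of $J$ on $\mathcal N$, periodizing it, and projecting onto $\mathcal N_k$. Since $u\in\mathcal N$, these combine to give $J(u)=c$, so $u$ is a ground state.

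The main obstacle I expect is the \emph{upper bound} $\limsup c_k\le c$ and the Nehari projection in the strongly indefinite case $\omega>8$, where $L$ has a nontrivial negative part. Here the Nehari manifold must be understood in the generalized (Pankov) sense, with fibers $\mathbb R^+u\oplus E^-$. I will handle this either by the staggering transformation of Remark~\ref{rem:staggering}, if it reduces the problem to the definite case, or by repeating the fiber-uniqueness argument of (h3) with uniform constants.

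For the remaining assertions, exponential decay follows by writing $u=L^{-1}(f(u))=L^{-1}\big((f(u)/u)\,u\big)$. Since $u_{n,m}\to0$, we have $f(u_{n,m})/u_{n,m}\to0$, so outside a large box $u$ satisfies $(L-V)u=0$ with $\|V\|_\infty$ small. The Green's function of $L$ decays exponentially, because $0\notin[-\omega,8-\omega]$ (Combes--Thomas type estimate), and a comparison argument then gives \eqref{eq:expdecay}. For odd $f$, $\pm u$ are both ground states since $J(-u)=J(u)$. For nonnegativity in the case $\omega<0$, $J(|u|)\le J(u)$ because $(-\Delta|u|,|u|)\le(-\Delta u,u)$, and $|u|$ lies on $\mathcal N$ after projection. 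This gives a nonnegative minimizer; for $\omega>8$ I transport it through the staggering map.
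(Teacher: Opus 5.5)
Your route is the paper's: uniform bounds on $c_k$ and $\|u^{(k)}\|$ from a finitely supported test function and a blow-up dichotomy, then translation of a large site to the origin and a pointwise limit that solves the equation by locality. The two inequalities $J(u)\le\liminf c_k$ and $\limsup c_k\le c$ (truncate and re-project a near-minimizer) close the argument, and Green-function decay gives localization. Your Fatou argument on the nonnegative densities $g(t)=\frac12 f(t)t-F(t)$ justifies the paper's inequality $\liminf m_{k_1,k_2}\ge J(w)$ more cleanly than its appeal to concentration--compactness.

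The step that fails as written is the vanishing case of your boundedness argument. The identity $c_k=\sum_{Q_k}g(u^{(k)}_n)$, with $g\ge0$ and (h4), yields only a uniform bound $\|u^{(k)}\|_\infty\le R$. If $\|u^{(k)}\|\to\infty$, this \emph{forces} $\|v^{(k)}\|_\infty\to0$ rather than contradicting it; it is (h4) that disposes of the nonvanishing alternative at once. What is missing is the Nehari identity combined with $(Lu,u)\ge|\omega|\,\|u\|^2$ and (h1). Fix $\epsilon>0$ with $\phi(\epsilon)\le|\omega|/2$, in the notation of \eqref{eq:phi}. The sites where $|u^{(k)}_n|\ge\epsilon$ number at most $C/\min\{g(\epsilon),g(-\epsilon)\}$ and carry values at most $R$, so
\[
|\omega|\,\|u^{(k)}\|^2\le (Lu^{(k)},u^{(k)})=\sum_{Q_k} f(u^{(k)}_n)u^{(k)}_n\le \frac{|\omega|}{2}\|u^{(k)}\|^2+C_\epsilon .
\]
This bounds $\|u^{(k)}\|$ with no dichotomy at all. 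Alternatively, use maximality on the ray: $c_k\ge J_k(sv^{(k)})\ge \frac{s^2}{2}\bigl(|\omega|-\phi(s\|v^{(k)}\|_\infty)\bigr)$ for every $s>0$. The paper's own version of this step asserts $\sum f(u)u=o(\|u\|^2)$ from (h1) alone, which has the complementary defect: the entries of $u^{(k)}$ are small only relative to $\|u^{(k)}\|$, so the (h4) counting you invoked is genuinely needed.

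Second, the obstacle you single out is not there. For $\omega>8$ one has $\sigma(L)=[-\omega,8-\omega]\subset(-\infty,0)$, so $L$ is negative definite, not strongly indefinite. Meanwhile (h1)--(h3) give $f(t)t>0$ for $t\neq0$, so the Nehari identity has no nonzero solution, periodic or in $\ell^2(\mathbb Z^2)$. Staggering does not rescue this case: for odd $f$ it turns $-\Delta u-\omega u=f(u)$ into $-\Delta v-(8-\omega)v=-f(v)$, a defocusing problem that violates (h3). Likewise, for $\omega<0$ with $l\le-\omega$ there is no nontrivial solution, because $f(t)t<lt^2$ for $t\neq0$. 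So the statement should be read and proved for $\omega<0$ with the crossing condition $l>-\omega$, which is exactly the definite setting your argument already uses; no generalized Nehari manifold is needed.

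Finally, in the positivity step write $J(t^*|u|)\le J(t^*u)\le\max_{t>0}J(tu)=J(u)=c$. The bare inequality $J(|u|)\le J(u)$ does not survive the projection $|u|\mapsto t^*|u|$, which can only increase $J$ along the ray.
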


\begin{theorem}[Periodic-to-localized convergence of ground states]
\label{thm:strong_convergence}

Assume that {\rm(h1)--(h4)} and
\eqref{eq:spectral_assumption} hold.
Let
$\Ds
u^{(k_1,k_2)}
\in
X_{k_1,k_2}
$
be periodic ground state solutions.

Then there exist a ground state solution
$\Ds
u\in\ell^2(\mathbb Z^2)
$
and lattice translations
$\Ds
(b_{k_1},c_{k_2})\in\mathbb Z^2
$
such that

\begin{equation}\label{eq:strongconv}
u^{(k_1,k_2)}
(\cdot+b_{k_1},\cdot+c_{k_2})
\longrightarrow
u
\qquad
\text{strongly in }
\ell^2(\mathbb Z^2),
\end{equation}
as $k_1,k_2\to\infty$.
\end{theorem}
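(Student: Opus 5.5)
The argument follows the one-dimensional Pankov--Rothos scheme, with the concentration--compactness step carried out in two lattice directions. Write $u_k:=u^{(k_1,k_2)}$ and $c_{k}:=J_{k_1,k_2}(u_k)=\inf_{\mathcal N_{k_1,k_2}}J_{k_1,k_2}$, where $\mathcal N_{k_1,k_2}$ is the periodic Nehari manifold. Let $c=\inf_{\mathcal N}J$ be the ground state level on $\mathbb Z^2$, which is attained by Theorem~\ref{thm:localized_ground_state}.

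\textbf{Step 1: $\limsup c_k\le c$ and $\liminf c_k\ge c_0>0$.} For the upper bound, take the localized ground state $w$ and truncate it to a box of side about $\min(k_1,k_2)/2$. Extend the truncation periodically and project it onto $\mathcal N_{k_1,k_2}$ by the unique scaling $t_k w^{trunc}$; existence and uniqueness of this scaling come from (h3) and the fibering analysis of Sections 5--6. The exponential decay \eqref{eq:expdecay} gives $t_k\to1$, and hence $\limsup c_k\le J(w)=c$. For the lower bound, (h1) and $0\notin\sigma(L)$ give $\langle J'(u),u\rangle\ge \delta\|u\|^2-o(\|u\|^2)$ near $0$. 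So $\|u_k\|_{k_1,k_2}\ge\rho>0$ uniformly, and $c_k$ is bounded below by a positive constant independent of $(k_1,k_2)$.

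\textbf{Step 2: uniform bounds and non-vanishing.} We next need $\|u_k\|_{k_1,k_2}\le C$ uniformly. Suppose instead $\|u_k\|\to\infty$ and normalize $v_k=u_k/\|u_k\|$. By (h2), $v_k$ nearly solves $Lv=l\,v$ wherever $u_k$ is large. In the lower gap, $l\notin\sigma(L)$ excludes a limit of this linear equation on $\mathbb Z^2$. Vanishing of $v_k$ is excluded separately, because (h4) makes $J-\frac12\langle J',\cdot\rangle=\sum(\tfrac12 f(u)u-F(u))$ blow up on the set where $u_k$ is large, contradicting $c_k\le C$. Once boundedness holds, the equation gives $\|u_k\|^2\lesssim \sum f(u_k)u_k\le \varepsilon\|u_k\|^2+C_\varepsilon\|u_k\|_\infty^{0}\sum_{|u_k|>\delta}u_k^2$. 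It follows that $\|u_k\|_\infty\ge\eta>0$. Choose $(b_{k_1},c_{k_2})\in Q_{k_1,k_2}$ where $|u_k|$ attains its maximum and translate, so that $|\tilde u_k(0,0)|\ge\eta$.

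\textbf{Step 3: passage to the limit and strong convergence.} Along a subsequence, the bounded sequence $\tilde u_k$ converges pointwise, hence weakly in $\ell^2$ when restricted to growing boxes, to some $u\ne0$. Pointwise convergence and locality of $\Delta$ show that $u$ solves \eqref{eq:main2D}. Fatou's lemma applied to the nonnegative density $\tfrac12 f(u)u-F(u)$, which is positive by (h3), gives
\[J(u)=\sum\bigl(\tfrac12 f(u_{n,m})u_{n,m}-F(u_{n,m})\bigr)\le\liminf c_k\le c.\]
Since $u\in\mathcal N$, we also have $J(u)\ge c$, so $u$ is a ground state and $c_k\to c$. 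Strong convergence is the \textbf{main obstacle}. We must rule out mass of $\tilde u_k$ escaping to a second bump at distance $\to\infty$ inside the torus. The plan is a Brezis--Lieb splitting on the torus: write $\tilde u_k=u\chi_{R_k}+r_k$ with cutoffs $R_k\to\infty$ slowly, and show that $r_k$ is asymptotically on the Nehari manifold or has vanishing sup norm. Equality in Fatou then forces $\sum_{\rm tail}(\tfrac12 f(r_k)r_k-F(r_k))\to0$. Since this density is comparable to $r^2$ for $r$ small (h3) and grows for $r$ large, we get $\|r_k\|_\infty\to0$. Because $\|r_k\|_\infty\to0$, the same estimate as in Step 2 yields $\sum f(r_k)r_k=o(\|r_k\|^2)$, and then the near-equation $Lr_k\approx f(r_k)$ gives $\|r_k\|\to0$ by invertibility of $L$. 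Hence $\tilde u_k\to u$ in $\ell^2(\mathbb Z^2)$, with the periodic $\tilde u_k$ restricted to its translated fundamental cell. The usual subsequence argument upgrades this to the full limit $k_1,k_2\to\infty$, with the limit chosen up to the translation and sign symmetry of ground states.
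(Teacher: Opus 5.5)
Your argument is correct in its core, and it takes a different route from the paper. The paper takes a weak limit $v$ and claims $I(v)=0$ and $J(v)\le\liminf m_{k_1,k_2}$ by weak lower semicontinuity. It then excludes dichotomy by a Brezis--Lieb splitting $J(v_k)=J(v)+J(w_k)+o(1)$: a nonvanishing remainder would yield a second profile $\widetilde w\in\mathcal N$ and force $\liminf J(v_k)\ge 2m$.

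You instead exploit the nonnegative Nehari density $g(t)=\frac12 f(t)t-F(t)$:
\begin{itemize}
\item Fatou gives $J(u)=\sum g(u)\le\liminf c_k\le c\le J(u)$.
\item Equality in Fatou forces $\sum_{Q_{k_1,k_2}\setminus B_{R_k}}g(\tilde u_k)\to0$, hence $\|r_k\|_\infty\to0$ by positivity and monotonicity of $g$ in $|t|$.
\item $Lr_k=f(r_k)+e_k$, with $\|f(r_k)\|\le\phi(\|r_k\|_\infty)\|r_k\|$ and $\|L^{-1}\|\le|\omega|^{-1}$ on every torus, gives $\|r_k\|\to0$.
\end{itemize}
This avoids weak lower semicontinuity of $J$, which fails because of $-\sum F$ (add a far-away bump $tw$ with $J(tw)<0$). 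It also avoids proving that the second profile lies on $\mathcal N$ with $J(\widetilde w)\le\liminf J(w_k)$, and it needs neither $m>0$ nor a Brezis--Lieb lemma. It also treats periodic sequences honestly via fundamental cells. The price is that it uses the exact equation and the sign of $g$, so it is tailored to the positive-definite Nehari setting; the paper's picture is the generic profile decomposition. Do write out $e_k$. It consists of the commutator $[\Delta,\chi_{R_k}]u$, which is small since $u\in\ell^2$. It also contains $f(\tilde u_k)-f(u)-f(\tilde u_k-u)$ on $B_{R_k}$, which is small since $f$ is Lipschitz on bounded sets and $\tilde u_k\to u$ in $\ell^2(B_{R_k})$ for slowly growing $R_k$. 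Step 1 needs only a near-minimizer $z\in\mathcal N$, so Theorem~\ref{thm:localized_ground_state} need not be invoked; Step 3 reproves it.

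Three local points need repair.

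\textbf{The vanishing case in Step 2 fails as written.} $\|v_k\|_\infty\to0$ does not force $u_k$ to be large anywhere: amplitudes can stay bounded while $\|u_k\|\to\infty$ through the growing period, and then (h4) gives nothing. The paper's Lemma~\ref{lem:uniform_bounds} has the same defect. The estimate you write right afterwards, done properly, gives the bound directly. From $c_k\le C$ and (h4) you get $\|u_k\|_\infty\le M$ and $\#\{|u_k|>\delta\}\le C/g_\delta$, where $g_\delta=\min\{g(\delta),g(-\delta)\}>0$. Choosing $\phi(\delta)\le|\omega|/2$ in the Nehari identity then yields $\frac{|\omega|}{2}\|u_k\|^2\le\phi(M)M^2C/g_\delta$. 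This needs no normalization and no use of $l\notin\sigma(L)$. Alternatively, in the vanishing case $c_k\ge J_{k_1,k_2}(sv_k)\ge\frac{s^2}{2}|\omega|-o(1)$ for every $s>0$.

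\textbf{$g$ is not comparable to $t^2$ near $0$.} By (h1), $g(t)=o(t^2)$, and it is of order $t^4$ for the saturable $f$. This is harmless: your $\ell^\infty$ conclusion uses only positivity and monotonicity, and the $\ell^2$ control comes from the equation.

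\textbf{The final upgrade to the whole family is not justified.} The ``usual subsequence argument'' gives convergence of the whole family to a single $u$ only if ground states are unique modulo translations and sign, which is not known. What you prove is subsequential convergence, or equivalently that the distance from translates of $u^{(k_1,k_2)}$ to the set of ground states tends to zero. The paper, which also passes to subsequences, proves no more.
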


\begin{remark}
Since
$\Ds
\sigma(-\Delta)=[0,8],
$
Theorems~\ref{thm:periodic_ground_states}--
\ref{thm:strong_convergence}
apply simultaneously to frequencies belonging to either spectral gap,
\[
\omega\in(-\infty,0)
\qquad\text{or}\qquad
\omega\in(8,\infty).
\]
These two regimes are related through the classical staggering
transformation discussed in Remark~\ref{rem:staggering}.
\end{remark}

The proofs rely on variational methods on the Nehari manifold,
spectral properties of the discrete Laplacian, and a
concentration--compactness argument adapted to periodic lattice
approximations.
Theorem~\ref{thm:strong_convergence} establishes the strong
$\ell^2(\mathbb Z^2)$ convergence, up to lattice translations,
of periodic ground states towards a localized ground state of the
infinite lattice problem.
Without loss of generality, the proofs are presented in the
spectral gap \(\omega<0\). The corresponding results for
\(\omega>8\) follow from the staggering transformation of
Remark~\ref{rem:staggering}.

In the next section we develop the Nehari manifold framework and
establish the geometric and variational properties required for the
construction of ground state solutions.
\section{Spectral and analytical preliminaries}\label{sec4}

In this section we collect several analytical properties of the
two--dimensional discrete Laplacian and of the operator
$\Ds
L=-\Delta-\omega,
$
which will be used throughout the variational construction of periodic
and localized ground states.

We begin with the spectral characterization of the discrete Laplacian
on $\mathbb Z^2$.
The discrete Laplacian is defined by
\begin{equation}\label{eq:laplacian_sec4}
(\Delta u)_{n,m}
=
u_{n+1,m}
+
u_{n-1,m}
+
u_{n,m+1}
+
u_{n,m-1}
-
4u_{n,m},
\qquad
(n,m)\in\mathbb Z^2.
\end{equation}

The operator $-\Delta$ is bounded, self--adjoint, and translation
invariant on $\ell^2(\mathbb Z^2)$.
Applying the discrete Fourier transform, one obtains the symbol
\begin{equation}\label{eq:symbol}
\lambda(\theta_1,\theta_2)
=
4
-
2\cos\theta_1
-
2\cos\theta_2,
\qquad
(\theta_1,\theta_2)\in[-\pi,\pi]^2.
\end{equation}

Since
$\Ds
0
\le
\lambda(\theta_1,\theta_2)
\le
8,
$
it follows that
\begin{equation}\label{eq:spectrum_sec4}
\sigma(-\Delta)
=
[0,8].
\end{equation}

Consequently, the spectrum of the operator
$\Ds
L=-\Delta-\omega
$
is given by
\begin{equation}\label{eq:spectrumL}
\sigma(L)
=
[-\omega,\,8-\omega].
\end{equation}

The following coercivity estimate will play a fundamental role in the
variational analysis.

\begin{lemma}\label{lem:coerciveL}
Assume that
$\Ds
0\notin\sigma(L).
$
Then there exists a constant
$\Ds
d=\operatorname{dist}(0,\sigma(L))>0
$
such that
\begin{equation}\label{eq:coercivity}
|(Lu,u)|
\ge
d\,\|u\|^2,
\qquad
u\in\ell^2(\mathbb Z^2).
\end{equation}
Consequently, the operator $L$ is invertible and
$\Ds
\|L^{-1}\|
\le
d^{-1}.
$
\end{lemma}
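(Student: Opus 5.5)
The plan is to use the explicit form of $\sigma(L)$ in \eqref{eq:spectrumL} together with the spectral theorem (equivalently, the discrete Fourier transform) to diagonalize $L$. Since $\sigma(L)=[-\omega,8-\omega]$ is connected and does not contain $0$, it lies entirely on one side of the origin. If $\omega<0$, every point of $\sigma(L)$ is $\ge -\omega=d>0$. If $\omega>8$, every point is $\le 8-\omega=-d<0$, where $d=\omega-8$. In both cases $d=\operatorname{dist}(0,\sigma(L))>0$, and $L$ is either $\ge d$ or $\le -d$ as a self-adjoint operator.

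To obtain \eqref{eq:coercivity}, I would apply Parseval's identity for the Fourier transform $\hat u$ on $[-\pi,\pi]^2$. This gives
\[
(Lu,u)=\frac{1}{(2\pi)^2}\int_{[-\pi,\pi]^2}\bigl(\lambda(\theta_1,\theta_2)-\omega\bigr)|\hat u(\theta)|^2\,d\theta ,
\]
with $\lambda$ as in \eqref{eq:symbol}. The multiplier $\lambda-\omega$ has constant sign and modulus at least $d$ pointwise, because $\lambda$ takes values in $[0,8]$. Hence $|(Lu,u)|\ge d\,(2\pi)^{-2}\int|\hat u|^2=d\|u\|^2$. Because the sign is constant, the absolute value can be moved outside the integral, and this is the only place where connectedness of the spectrum (no sign change of $\lambda-\omega$) matters. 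Alternatively, the spectral measure $E$ of $L$ yields $(Lu,u)=\int_{\sigma(L)}t\,d(E_tu,u)$ with $|t|\ge d$ of fixed sign, which gives the same conclusion.

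For invertibility, Cauchy--Schwarz applied to \eqref{eq:coercivity} gives $d\|u\|^2\le|(Lu,u)|\le\|Lu\|\|u\|$, so $\|Lu\|\ge d\|u\|$. Therefore $L$ is injective with closed range, and since $L$ is self-adjoint its range is dense ($\operatorname{Ran}(L)^\perp=\ker L=\{0\}$), so $L$ is onto. The bounded inverse satisfies $\|L^{-1}\|\le d^{-1}$. Equivalently, $L^{-1}$ is the Fourier multiplier $(\lambda-\omega)^{-1}$, which is bounded by $d^{-1}$ in modulus.

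The same argument carries over verbatim to $X_{k_1,k_2}$, where the discrete Fourier transform restricts $\theta$ to the finite grid $\theta_j\in 2\pi\mathbb Z/k_j$ and the spectrum is a subset of $[-\omega,8-\omega]$. The constant $d$ therefore works uniformly in $(k_1,k_2)$, which is what the later limiting argument needs. There is no real obstacle here. The only point requiring care is the sign observation: for a general self-adjoint operator whose spectrum surrounds $0$ on both sides, $|(Lu,u)|\ge d\|u\|^2$ fails. It holds here precisely because $\sigma(L)$ is a single interval avoiding $0$.
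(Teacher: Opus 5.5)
Your proof is correct and uses the same spectral diagonalization as the paper. The paper's proof simply asserts that, because $L$ is bounded and self-adjoint, the spectral theorem gives $|(Lu,u)|\ge \operatorname{dist}(0,\sigma(L))\|u\|^2$. As you point out, that inference is false for a general self-adjoint operator whose spectrum lies on both sides of $0$: for example, $L=\mathrm{diag}(1,-1)$ and $u=(1,1)$ give $(Lu,u)=0$ although $\operatorname{dist}(0,\sigma(L))=1$.

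What makes the inequality valid here is exactly your observation. Since $\sigma(L)=[-\omega,8-\omega]$ lies on one side of the origin, the multiplier $\lambda-\omega$ has constant sign with modulus at least $d$, so the absolute value can be taken inside the Parseval integral. Your argument therefore supplies the step that the paper's one-line proof leaves out.

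The rest of your proposal is also sound. The resolvent part, via $\|Lu\|\ge d\|u\|$ and self-adjointness, is correct; that bound actually holds for every self-adjoint $L$ without the sign condition, so only the quadratic-form estimate needs it. Your remark that the same $d$ bounds the periodic operators on $X_{k_1,k_2}$ uniformly is also correct, and it is precisely what the later argument uses, for instance in \eqref{eq:Lpositive_periodic} in the proof of Lemma~\ref{lem:nehari_manifold}.
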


\begin{proof}
Since $L$ is bounded and self--adjoint, the spectral theorem implies
\[
|(Lu,u)|
\ge
\operatorname{dist}(0,\sigma(L))
\,\|u\|^2.
\]
The assumption $0\notin\sigma(L)$ yields
$\Ds
d:=\operatorname{dist}(0,\sigma(L))>0,
$
which proves \eqref{eq:coercivity}. The bound for the resolvent follows
immediately from the spectral theorem.
\end{proof}

We next consider the Green function associated with $L$.

\begin{lemma}\label{lem:green}
Assume that
$$
\omega\notin[0,8].
$$
Then the inverse operator $L^{-1}$ admits an integral kernel
$\Ds
G((n,m),(p,q)),
$
satisfying
$\Ds
L\,G(\cdot,(p,q))
=
\delta_{(p,q)}.
$
Moreover, there exist constants $C>0$ and $\alpha>0$ such that
\begin{equation}\label{eq:green_decay}
|G((n,m),(p,q))|
\le
C
e^{-\alpha |(n-p,m-q)|}
\end{equation}
for all $(n,m),(p,q)\in\mathbb Z^2$.
\end{lemma}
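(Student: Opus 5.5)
We construct the kernel by Fourier analysis and obtain exponential decay by shifting the integration contour into the complex domain (a Paley--Wiener type argument). Since $L=-\Delta-\omega$ is translation invariant, we look for $G((n,m),(p,q))=g(n-p,m-q)$ with
\[
g(n,m)=\frac{1}{4\pi^2}\int_{[-\pi,\pi]^2}\frac{e^{i(n\theta_1+m\theta_2)}}{\lambda(\theta_1,\theta_2)-\omega}\,d\theta_1\,d\theta_2 .
\]
The symbol $\lambda$ is given by \eqref{eq:symbol}. Because $\omega\notin[0,8]$ and $\lambda$ takes values in $[0,8]$, the denominator is bounded away from zero by $d=\operatorname{dist}(\omega,[0,8])>0$. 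Hence the integrand is smooth and $g$ is well defined and bounded. Applying $L$ under the integral multiplies the integrand by $\lambda-\omega$, which gives $Lg=\delta_{(0,0)}$. Convolution with $g$ is then exactly the Fourier multiplier $(\lambda-\omega)^{-1}$, i.e.\ $L^{-1}$ from Lemma~\ref{lem:coerciveL}.

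For the decay, we extend $\theta_j\mapsto\theta_j+i\eta_j$ with $\eta_j$ real and small. The extended symbol is
\[
\lambda(\theta+i\eta)=4-2\cos\theta_1\cosh\eta_1-2\cos\theta_2\cosh\eta_2+2i(\sin\theta_1\sinh\eta_1+\sin\theta_2\sinh\eta_2).
\]
This gives the estimate $|\lambda(\theta+i\eta)-\lambda(\theta)|\le 2(\cosh\eta_1-1)+2(\cosh\eta_2-1)+2|\sinh\eta_1|+2|\sinh\eta_2|$. We choose $\eta_0>0$ so small that this quantity is at most $d/2$ whenever $|\eta_j|\le\eta_0$. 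Then $|\lambda(\theta+i\eta)-\omega|\ge d/2$ on the whole strip $\{|\operatorname{Im}\theta_j|\le\eta_0\}$. The integrand is therefore analytic in each variable on the strip and $2\pi$-periodic in the real parts.

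By Cauchy's theorem, applied one variable at a time, the vertical sides of the shifted rectangle cancel by periodicity. So the real torus may be replaced by $[-\pi,\pi]^2+i(\eta_1,\eta_2)$ for any fixed $|\eta_j|\le\eta_0$. We choose $\eta_1=\eta_0\operatorname{sgn}(n)$ and $\eta_2=\eta_0\operatorname{sgn}(m)$. The factor $e^{i(n\theta_1+m\theta_2)}$ then acquires modulus $e^{-\eta_0(|n|+|m|)}$, and we obtain
\[
|g(n,m)|\le \frac{2}{d}\,e^{-\eta_0(|n|+|m|)}\le \frac{2}{d}\,e^{-\eta_0|(n,m)|},
\]
using $|n|+|m|\ge|(n,m)|$. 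This is \eqref{eq:green_decay} with $C=2/d$ and $\alpha=\eta_0$.

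The main obstacle is the uniform lower bound on the complexified denominator. The choice of $\eta_0$ must be quantitative in terms of $d$, which is what forces the decay rate $\alpha$ to degenerate as $\omega$ approaches the band $[0,8]$. An alternative route is a Combes--Thomas argument: conjugate $L$ by $e^{\alpha\langle x,e\rangle}$ and show that the perturbation has norm $O(\alpha)$, so invertibility persists for small $\alpha$. It yields the same conclusion and could replace the contour shift if the latter proves awkward.
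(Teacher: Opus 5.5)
Your proof is correct, and it takes a different, considerably more explicit route than the paper. The paper's proof is a one-line appeal to ``standard resolvent estimates for discrete Schr\"odinger operators'', in effect a Combes--Thomas bound, which you mention only as a fallback. You instead exploit translation invariance. You write $G((n,m),(p,q))=g(n-p,m-q)$ as the Fourier integral of $(\lambda-\omega)^{-1}$ and verify $L\,G(\cdot,(p,q))=\delta_{(p,q)}$ directly. You then get the decay by shifting the torus into the strip $|\operatorname{Im}\theta_j|\le\eta_0$, where the bound $|\lambda(\theta+i\eta)-\lambda(\theta)|\le d/2$ keeps the denominator at least $d/2$ away from zero. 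The remaining steps are also sound: the complexified symbol, the cancellation of the vertical sides by $2\pi$-periodicity (valid since $n,m\in\mathbb Z$), the choice $\eta_j=\eta_0\operatorname{sgn}(\cdot)$, and $|n|+|m|\ge|(n,m)|$. Your route gives a self-contained proof with explicit constants $C=2/d$ and $\alpha=\eta_0(d)$, and slightly stronger decay in the $\ell^1$ lattice distance. It also shows transparently why $\alpha\to0$ as $\omega$ approaches the band $[0,8]$. The Combes--Thomas route the paper implicitly relies on is more general. It needs only finite-range hopping and a spectral parameter at positive distance from the spectrum, not translation invariance. So it applies verbatim to resolvents of $L+V$ with a bounded potential $V$, such as the linearized operators $L_\pm$ of the stability section, where your Fourier representation is no longer available.
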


\begin{proof}
Since $\omega\notin[0,8]$, the spectral parameter lies outside the
spectrum of the discrete Laplacian. The exponential decay of the Green
function follows from standard resolvent estimates for discrete
Schr\"odinger operators.
\end{proof}

For frequencies below the spectral band, the Green function is strictly
positive.

\begin{lemma}\label{lem:positive_green}
Assume that
$\Ds
\omega<0.
$
Then
$\Ds
G((n,m),(p,q))
>
0
$
for all
$\Ds
(n,m),(p,q)\in\mathbb Z^2.
$
Consequently,
$\Ds
(L^{-1}\phi)_{n,m}>0
$
whenever
$\Ds
\phi\ge0,
\qquad
\phi\not\equiv0.
$
\end{lemma}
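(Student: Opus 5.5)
We plan to obtain the positivity of $G$ from a Neumann series expansion of $L^{-1}$. For $\omega<0$ we write
\[
L=-\Delta-\omega=(4-\omega)I-A,
\qquad
(Au)_{n,m}=u_{n+1,m}+u_{n-1,m}+u_{n,m+1}+u_{n,m-1}.
\]
Here $A$ is the adjacency operator of $\mathbb Z^2$. It has nonnegative matrix entries and satisfies $\|A\|_{\ell^2\to\ell^2}\le 4$, which follows from the triangle inequality applied to the four shifts. Because $4-\omega>4$, we can factor
\[
L=(4-\omega)\Bigl(I-\tfrac{1}{4-\omega}A\Bigr),
\qquad
\Bigl\|\tfrac{1}{4-\omega}A\Bigr\|\le \tfrac{4}{4-\omega}<1,
\]
and the Neumann series converges in operator norm:
\[
L^{-1}=\sum_{j=0}^{\infty}\frac{A^j}{(4-\omega)^{j+1}}.
\]
This series gives the same bounded inverse as in Lemma~\ref{lem:coerciveL}, so its kernel coincides with $G$ from Lemma~\ref{lem:green}.

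Next we read off the kernel entrywise. Since $G((n,m),(p,q))=(L^{-1}\delta_{(p,q)})_{n,m}$, and each term of the series is evaluated on a finitely supported vector, we obtain
\[
G((n,m),(p,q))=\sum_{j\ge0}\frac{W_j((n,m),(p,q))}{(4-\omega)^{j+1}}.
\]
Here $W_j$ is the number of nearest-neighbour walks of length $j$ from $(p,q)$ to $(n,m)$. Every term is nonnegative. Let $j_0=|n-p|+|m-q|$ be the $\ell^1$ distance between the two points. Then $W_{j_0}\ge1$, because there is a monotone lattice path of that length. Hence
\[
G((n,m),(p,q))\ge (4-\omega)^{-j_0-1}>0.
\]
Interchanging the limit with evaluation at a single entry is justified because operator-norm convergence implies entrywise convergence.

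For the consequence, let $\phi\in\ell^2(\mathbb Z^2)$ with $\phi\ge0$ and $\phi\not\equiv0$. We write
\[
(L^{-1}\phi)_{n,m}=\sum_{(p,q)}G((n,m),(p,q))\,\phi_{p,q}.
\]
This series converges absolutely, since $G(\cdot,(p,q))$ decays exponentially by \eqref{eq:green_decay} and $\phi\in\ell^2$. The identity itself holds because $L^{-1}$ is bounded and finitely supported truncations of $\phi$ converge to $\phi$ in $\ell^2$. All terms are nonnegative. At an index $(p_0,q_0)$ with $\phi_{p_0,q_0}>0$, the term $G((n,m),(p_0,q_0))\,\phi_{p_0,q_0}$ is strictly positive. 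Therefore $(L^{-1}\phi)_{n,m}>0$ for every $(n,m)$.

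The argument is essentially routine. The only point that needs care is identifying the Neumann series with the Green kernel of Lemma~\ref{lem:green}, which is handled by the uniqueness of the bounded inverse.
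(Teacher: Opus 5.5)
Your proof is correct and is essentially the paper's argument made explicit. The paper only notes that $L$ has positive diagonal and nonpositive off-diagonal entries and is irreducible, then cites inverse-positivity of irreducible $M$--matrices; the standard proof of that fact is exactly your Neumann expansion $L^{-1}=\sum_{j\ge0}A^j/(4-\omega)^{j+1}$, with connectivity of $\mathbb Z^2$ supplying a positive walk term.

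Your version has two advantages. It works directly on the infinite lattice, where finite-dimensional $M$--matrix theory does not literally apply. The same series also gives $0<G((n,m),(p,q))\le |\omega|^{-1}\bigl(4/(4-\omega)\bigr)^{|n-p|+|m-q|}$, so you do not need Lemma~\ref{lem:green} for the absolute convergence in the second part.
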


\begin{proof}
For $\omega<0$, the operator $L$ is a discrete elliptic operator with
positive diagonal entries and nonpositive off--diagonal entries.
Moreover, it is irreducible on the connected lattice $\mathbb Z^2$.
Hence $L$ is an $M$--matrix. Standard results for irreducible
$M$--matrices imply that $L^{-1}$ is positivity preserving and that its
kernel is strictly positive.
\end{proof}

The next result yields the exponential localization of solutions.

\begin{lemma}\label{lem:expdecay}
Let
$\Ds
u\in\ell^2(\mathbb Z^2)
$
be a solution of
\[
-\Delta u_{n,m}-\omega u_{n,m}
=
f(u_{n,m}),
\]
where
$\Ds
\omega\notin[0,8].
$
Then there exist constants $C>0$ and $\alpha>0$ such that
\begin{equation}\label{eq:expdecay_sec4}
|u_{n,m}|
\le
C e^{-\alpha |(n,m)|},
\qquad
(n,m)\in\mathbb Z^2.
\end{equation}
\end{lemma}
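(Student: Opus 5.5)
The plan is to rewrite the equation as a fixed-point identity through the Green function of Lemma~\ref{lem:green}, and then to use the smallness of the nonlinearity near $0$ to bootstrap exponential decay. Since $u\in\ell^2(\mathbb Z^2)$, we have $u_{n,m}\to0$ as $|(n,m)|\to\infty$, and $\|u\|_\infty\le\|u\|<\infty$. Write $f(u_{n,m})=V_{n,m}u_{n,m}$ with $V_{n,m}=f(u_{n,m})/u_{n,m}$ (and $V_{n,m}=0$ if $u_{n,m}=0$). By (h1) and continuity, $V$ is bounded and $V_{n,m}\to0$ at infinity. We then fix a small $\varepsilon>0$, to be determined, and choose $R$ so that $|V_{n,m}|\le\varepsilon$ for $|(n,m)|>R$. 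The equation then splits as
\[
(L-V\chi_{\{|\cdot|>R\}})u=V\chi_{\{|\cdot|\le R\}}u=:g,
\]
where $g$ is finitely supported.

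The core step is to show that the operator $L_\varepsilon:=L-W$, with $W=V\chi_{\{|\cdot|>R\}}$ and $\|W\|_\infty\le\varepsilon$, is invertible on exponentially weighted spaces. For $\alpha>0$ we conjugate by the weight $e^{\alpha\langle x\rangle}$, using $\langle x\rangle=\sqrt{1+|x|^2}$ (or simply $e^{\alpha|x|}$, which is Lipschitz on the lattice). Since $-\Delta$ only couples nearest neighbours, we have
\[
e^{\alpha\langle\cdot\rangle}Le^{-\alpha\langle\cdot\rangle}=L+B_\alpha,\qquad \|B_\alpha\|\le 4(e^{\alpha}-1),
\]
because each hopping term is multiplied by $e^{\pm\alpha(\langle x\rangle-\langle y\rangle)}$ with $|\langle x\rangle-\langle y\rangle|\le1$. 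Let $d$ be the constant of Lemma~\ref{lem:coerciveL}, so that $\|L^{-1}\|\le d^{-1}$. Multiplication by $W$ commutes with the weight. A Neumann series argument therefore shows that $L+B_\alpha-W$ is invertible on $\ell^2(\mathbb Z^2)$ whenever $4(e^\alpha-1)+\varepsilon<d$. We fix $\varepsilon=d/2$ and $\alpha>0$ small enough that $4(e^{\alpha}-1)<d/2$.

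Now set $v=L_\varepsilon^{-1}g$ computed in the weighted space, that is, $v=e^{-\alpha\langle\cdot\rangle}(L+B_\alpha-W)^{-1}(e^{\alpha\langle\cdot\rangle}g)$. Since $g$ has finite support, $e^{\alpha\langle\cdot\rangle}g\in\ell^2$, so $e^{\alpha\langle\cdot\rangle}v\in\ell^2$. The case $\alpha=0$ shows that $L_\varepsilon$ is injective on $\ell^2$, so $v=u$. Hence $e^{\alpha\langle\cdot\rangle}u\in\ell^2\subset\ell^\infty$, which gives $|u_{n,m}|\le Ce^{-\alpha\langle(n,m)\rangle}\le Ce^{-\alpha|(n,m)|}$, and this is \eqref{eq:expdecay_sec4}.

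The main obstacle is the uniqueness identification $v=u$ together with the commutator bound. Both are routine but must be checked carefully. An alternative route would iterate $u=L^{-1}f(u)$ using the kernel bound \eqref{eq:green_decay} and a discrete convolution inequality. That approach requires controlling $\sum_y e^{-\alpha|x-y|}\varepsilon|u_y|$ with a smaller decay rate, which is messier. The weighted-space Neumann series avoids this and uses only Lemma~\ref{lem:coerciveL} and hypothesis (h1).
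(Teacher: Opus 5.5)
Your proof is correct, and it follows a genuinely different route from the paper.

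The paper writes $u_{n,m}=\sum_{(p,q)}G((n,m),(p,q))f(u_{p,q})$ and quotes the kernel bound \eqref{eq:green_decay} of Lemma~\ref{lem:green}, which is itself only asserted via standard resolvent estimates. It then invokes an unspecified bootstrap, justified by the square summability of $f(u)$ coming from (h2). That justification is not enough on its own. Convolving an exponentially decaying kernel with a merely square-summable sequence gives no exponential decay. What makes a bootstrap close is the fact you isolate: $V=f(u)/u\to0$ at infinity by (h1), so the far-field part of the potential is small and can be absorbed.

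Your Combes--Thomas conjugation by $e^{\alpha\langle\cdot\rangle}$, followed by the Neumann series for $L+B_\alpha-W$, uses only $\|L^{-1}\|\le d^{-1}$ from Lemma~\ref{lem:coerciveL}, (h1), and continuity of $f$. This buys several things:
\begin{itemize}
\item It treats both gaps $\omega<0$ and $\omega>8$ at once, without positivity or staggering.
\item It gives a rate depending only on $d$: any $\alpha$ with $4(e^\alpha-1)<d/2$ works, and only $C$ depends on $u$.
\item It makes Lemma~\ref{lem:green} unnecessary. The same computation with $W=0$ and $g=\delta_{(p,q)}$ actually proves \eqref{eq:green_decay}.
\end{itemize}
The paper's Green-function route can be completed once the smallness of $V$ is inserted. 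It then requires the convolution estimate with a reduced exponent that you mention as the messier alternative.

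Your identification step is also sound. The conjugation identity holds pointwise because $L$ has finite range, so $(L-W)v=g$ with $v\in\ell^2$. Since $\|W\|_\infty\le d/2<d$, the operator $L-W$ is injective on $\ell^2$, which forces $v=u$.
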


\begin{proof}
Using the Green representation formula,
\[
u_{n,m}
=
\sum_{(p,q)\in\mathbb Z^2}
G((n,m),(p,q))
\,f(u_{p,q}),
\]
together with the exponential decay of the Green function established
in Lemma~\ref{lem:green}, one obtains the stated estimate by a standard
bootstrap argument. Since $u\in\ell^2(\mathbb Z^2)$ and $f$ is
asymptotically linear, the nonlinear term remains square summable,
which allows the exponential decay of the kernel to be transferred to
the solution.
\end{proof}

The results established in this section provide the spectral and
analytical tools required for the variational construction of periodic
and localized ground states. In the next section we develop the
geometry of the associated Nehari manifolds and establish the
variational framework underlying the existence theory.
\section{Nehari manifolds}\label{sec5}

In this section we study the Nehari manifolds associated with the
functionals $J_{k_1,k_2}$ and $J$ introduced in Section~\ref{sec3}.
In view of the discussion at the end of Section~\ref{sec3}, the proofs
are presented in the spectral gap
$\Ds
\omega<0.
$
The corresponding statements for the upper spectral gap $\omega>8$
follow from the staggering transformation described in
Remark~\ref{rem:staggering}.

Throughout this section we assume that
\begin{equation}\label{eq:crossing_condition}
l+\omega>0,
\end{equation}
or equivalently,
\[
l>-\omega=\inf\sigma(L).
\]
This is the spectral crossing condition which ensures that the
asymptotic slope of the nonlinearity crosses the bottom of the spectrum
of the linear operator.

We first record a simple consequence of assumption {\rm(h3)}.

\begin{lemma}\label{lem:monotonicity}
Assumption {\rm(h3)} implies that the function
$\Ds
t\mapsto \frac{f(t)}{|t|}
$
is strictly increasing on $(-\infty,0)\cup(0,\infty)$.
Moreover, the function
\[
t\mapsto \frac12 f(t)t-F(t)
\]
is strictly increasing on $[0,\infty)$ and strictly decreasing on
$(-\infty,0]$.
\end{lemma}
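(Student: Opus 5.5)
The plan is to treat the two assertions separately. Both come from elementary one-variable calculus applied to (h3).

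For the first assertion, fix $t>0$ and set $g(t)=f(t)/t$. Since $f\in C^1(\mathbb R)$, we have
\[
g'(t)=\frac{f'(t)t-f(t)}{t^2}=\frac{f'(t)t^2-f(t)t}{t^3}.
\]
By (h3) the numerator is strictly positive for $t\neq0$. Hence $g'(t)>0$ on $(0,\infty)$, and $f(t)/|t|=g(t)$ is strictly increasing there.

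For $t<0$ we have $f(t)/|t|=-f(t)/t=-g(t)$. The same formula for $g'$ now has denominator $t^3<0$, so $g'(t)<0$ and $-g$ is strictly increasing on $(-\infty,0)$. Equivalently,
\[
\frac{d}{dt}\frac{f(t)}{|t|}=\frac{f'(t)t^2-f(t)t}{|t|^3}>0 \qquad\text{for every } t\neq0.
\]
The natural reading of the statement is monotonicity on each half-line separately, not across $0$. That is all I will prove.

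For the second assertion, let $h(t)=\frac12 f(t)t-F(t)$, which is $C^1$ because $F'=f$. Differentiating gives
\[
h'(t)=\tfrac12 f'(t)t+\tfrac12 f(t)-f(t)=\tfrac12\bigl(f'(t)t-f(t)\bigr)=\frac{1}{2t}\bigl(f'(t)t^2-f(t)t\bigr).
\]
By (h3), $h'(t)>0$ for $t>0$ and $h'(t)<0$ for $t<0$. Since $h$ is continuous at $0$, the mean value theorem makes $h$ strictly increasing on the closed half-line $[0,\infty)$ and strictly decreasing on $(-\infty,0]$.

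Alternatively, $h$ can be written in terms of $g$. From $h(t)=\int_0^t\bigl(\tfrac12 f(t)-\tfrac12 f(s)\bigr)\,ds+\tfrac12\int_0^t f(s)\,ds$, one gets $h(t)=\frac12\int_0^t s^2 g'(s)\,ds$. Integration by parts confirms this, using $f(s)=o(s)$ from (h1) to kill the boundary terms at $0$. This form shows directly that $h\ge0$, which is useful later, but I will not need it for the stated monotonicity.

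There is no real obstacle here. The only point needing care is the sign bookkeeping for $t<0$, where dividing (h3) by $t$ or $t^3$ flips the inequality. I will make this explicit by writing the derivatives with denominators $|t|^3$ and $2t$ as above.
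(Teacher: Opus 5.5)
Your proof is correct and takes the same route as the paper: differentiate $f(t)/|t|$ and $\tfrac12 f(t)t-F(t)$, then read off the signs from (h3). Your sign bookkeeping for $t<0$ is in fact more careful than the paper's, which asserts that $f(t)/t$ itself is strictly increasing on $(-\infty,0)$; (h3) actually makes $f(t)/t$ strictly decreasing there, and it is $f(t)/|t|=-f(t)/t$ that increases, exactly as you show. In your optional aside, the first identity needs a minus sign, $h(t)=\int_0^t\bigl(\tfrac12 f(t)-\tfrac12 f(s)\bigr)\,ds-\tfrac12\int_0^t f(s)\,ds$, and (h1) is not needed because the formula $h(t)=\tfrac12\int_0^t s^2g'(s)\,ds$ (which is correct) follows from $h(0)=0$ and $h'(s)=\tfrac12 s^2g'(s)$ alone.
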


\begin{proof}
For $t\neq0$,
\[
\frac{d}{dt}\left(\frac{f(t)}{t}\right)
=
\frac{f'(t)t-f(t)}{t^2}.
\]
Assumption {\rm(h3)} gives
\[
f'(t)t^2-f(t)t>0,
\qquad t\neq0,
\]
and hence $f(t)/t$ is strictly increasing on each of the intervals
$(-\infty,0)$ and $(0,\infty)$.

Furthermore,
\[
\frac{d}{dt}
\left(
\frac12 f(t)t-F(t)
\right)
=
\frac12\bigl(f'(t)t-f(t)\bigr).
\]
Using again {\rm(h3)}, this derivative is positive for $t>0$ and
negative for $t<0$. This proves the claim.
\end{proof}

The Nehari manifolds are defined by
\begin{equation}\label{eq:Nehari_periodic}
\mathcal N_{k_1,k_2}
=
\left\{
u\in X_{k_1,k_2}\setminus\{0\}:
\langle J'_{k_1,k_2}(u),u\rangle=0
\right\},
\end{equation}
and
\begin{equation}\label{eq:Nehari_infinite}
\mathcal N
=
\left\{
u\in X\setminus\{0\}:
\langle J'(u),u\rangle=0
\right\},
\qquad
X=\ell^2(\mathbb Z^2).
\end{equation}

Let
\begin{equation}\label{eq:I_periodic}
I_{k_1,k_2}(u)
=
\langle J'_{k_1,k_2}(u),u\rangle,
\qquad
u\in X_{k_1,k_2},
\end{equation}
and
\begin{equation}\label{eq:I_infinite}
I(u)
=
\langle J'(u),u\rangle,
\qquad
u\in X.
\end{equation}
Thus
\begin{equation}\label{eq:I_periodic_explicit}
I_{k_1,k_2}(u)
=
(Lu,u)_{k_1,k_2}
-
\sum_{(n,m)\in Q_{k_1,k_2}}
f(u_{n,m})u_{n,m},
\end{equation}
and
\begin{equation}\label{eq:I_infinite_explicit}
I(u)
=
(Lu,u)
-
\sum_{(n,m)\in\mathbb Z^2}
f(u_{n,m})u_{n,m}.
\end{equation}

Their derivatives are given by
\begin{equation}\label{eq:Ikprime}
\begin{aligned}
\langle I'_{k_1,k_2}(u),v\rangle
&=
2(Lu,v)_{k_1,k_2}
\\
&\quad
-
\sum_{(n,m)\in Q_{k_1,k_2}}
\bigl[
f(u_{n,m})+f'(u_{n,m})u_{n,m}
\bigr]v_{n,m},
\qquad v\in X_{k_1,k_2},
\end{aligned}
\end{equation}
and
\begin{equation}\label{eq:Iprime}
\begin{aligned}
\langle I'(u),v\rangle
&=
2(Lu,v)
\\
&\quad
-
\sum_{(n,m)\in\mathbb Z^2}
\bigl[
f(u_{n,m})+f'(u_{n,m})u_{n,m}
\bigr]v_{n,m},
\qquad v\in X.
\end{aligned}
\end{equation}

\begin{lemma}\label{lem:nehari_manifold}
Under assumptions {\rm(h1)--(h4)} and \eqref{eq:crossing_condition},
the sets $\mathcal N_{k_1,k_2}$ and $\mathcal N$ are nonempty closed
$C^1$ submanifolds of $X_{k_1,k_2}$ and $X$, respectively. 
Moreover,
\[
I'_{k_1,k_2}(u)\neq0
\quad\text{for all }u\in\mathcal N_{k_1,k_2},
\]
and
\[
I'(u)\neq0
\quad\text{for all }u\in\mathcal N.
\]
Finally, there exists $\beta_0>0$, independent of $k_1,k_2$, such that
\begin{equation}\label{eq:nehari_away_zero}
\|u\|_{k_1,k_2}\ge\beta_0
\quad\text{for all }u\in\mathcal N_{k_1,k_2},
\qquad
\|u\|\ge\beta_0
\quad\text{for all }u\in\mathcal N.
\end{equation}
\end{lemma}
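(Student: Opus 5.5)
Three properties have to be established: nonemptiness, the $C^1$ submanifold structure (through $I'\neq0$ on the constraint set), and the uniform lower bound \eqref{eq:nehari_away_zero}. Since $\omega<0$, we have $\sigma(L)=[-\omega,8-\omega]\subset(0,\infty)$, so Lemma~\ref{lem:coerciveL} gives $(Lu,u)\ge d\|u\|^2$ with $d=-\omega$ on $X$. The same bound holds on $X_{k_1,k_2}$, because the periodic spectrum is a subset of $[-\omega,8-\omega]$: the Fourier symbol \eqref{eq:symbol} is evaluated at $\theta_j\in\frac{2\pi}{k_j}\mathbb Z$. This constant does not depend on $k_1,k_2$, and that independence is what makes $\beta_0$ uniform.

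\emph{Lower bound.} By (h1) and (h2), for every $\varepsilon>0$ there is $C_\varepsilon$ with $|f(t)t|\le\varepsilon t^2+C_\varepsilon|t|^p$ for some fixed $p>2$ (say $p=4$). We also use $\|u\|_{\ell^p}\le\|u\|_{\ell^2}$, valid both on $\mathbb Z^2$ and on $Q_{k_1,k_2}$. Take $u\in\mathcal N$ and choose $\varepsilon=d/2$. Then
\[
d\|u\|^2\le(Lu,u)=\sum f(u_{n,m})u_{n,m}\le\tfrac d2\|u\|^2+C\|u\|^p,
\]
so $\|u\|^{p-2}\ge d/(2C)$. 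This defines $\beta_0$, and the identical computation on $Q_{k_1,k_2}$ gives the same $\beta_0$. Closedness follows at once: $I$ is continuous, and $\mathcal N=I^{-1}(0)\setminus\{0\}$ is bounded away from $0$.

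\emph{Nonemptiness.} I would show that for suitable $u$ the fibering map $g(t)=I(tu)/t^2=(Lu,u)-\sum \frac{f(tu_{n,m})}{tu_{n,m}}u_{n,m}^2$ changes sign on $(0,\infty)$. For small $t$, (h1) gives $g(t)\to(Lu,u)>0$. For large $t$, monotone convergence (Lemma~\ref{lem:monotonicity}, with $f(s)/s\uparrow l$) gives $g(t)\to(Lu,u)-l\|u\|^2$. By the crossing condition \eqref{eq:crossing_condition}, $l>\inf\sigma(L)$, so there is a $u$ with $(Lu,u)<l\|u\|^2$. On $\mathbb Z^2$ this follows from the spectral theorem, using a Fourier-localized function near $\theta=0$, truncated to finite support. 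In the periodic case the constant sequence $u\equiv1$ works, since $(Lu,u)=-\omega|Q|<l|Q|$. The intermediate value theorem then yields some $tu\in\mathcal N$. Because $f(s)/s$ is strictly increasing in $|s|$, $g$ is strictly decreasing, so $t$ is unique; this will matter later.

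\emph{Nondegeneracy} is the key step. For $u\in\mathcal N$, compute from \eqref{eq:Iprime}:
\[
\langle I'(u),u\rangle=2(Lu,u)-\sum\bigl[f(u_{n,m})u_{n,m}+f'(u_{n,m})u_{n,m}^2\bigr]=\sum\bigl[f(u_{n,m})u_{n,m}-f'(u_{n,m})u_{n,m}^2\bigr],
\]
where the second equality uses $(Lu,u)=\sum f(u)u$. By (h3) every nonzero term of the last sum is strictly negative, and $u\ne0$, so $\langle I'(u),u\rangle<0$. Hence $I'(u)\neq0$, and the implicit function theorem makes $\mathcal N$ a $C^1$ submanifold of codimension one, given that $I\in C^1$.

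The main technical obstacle is verifying $I\in C^1(\ell^2(\mathbb Z^2))$. This requires showing that $u\mapsto f'(u)u$ defines a continuous superposition operator on $\ell^2$. Since $f\in C^1$ and $f'(t)=o(1)$ near $0$ by (h1)/(h3), the map $t\mapsto f'(t)t$ is $C^0$, with $|f'(t)t|\le C|t|$ on bounded sets; $\ell^2$-bounded sets are $\ell^\infty$-bounded, so the standard Nemytskii argument on $\ell^2$ applies. In the finite-dimensional periodic case this issue is trivial.
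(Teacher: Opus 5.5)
Your proposal is correct and follows essentially the same route as the paper. Nonemptiness comes from a sign change of $t\mapsto I(tv)$, using the constant sequence in the periodic case and a finitely supported test function near the bottom of the spectrum on $\mathbb Z^2$. Nondegeneracy comes from $\langle I'(u),u\rangle=\sum[f(u_{n,m})u_{n,m}-f'(u_{n,m})u_{n,m}^2]<0$ via (h3), and the uniform lower bound rests on $(Lu,u)\ge|\omega|\|u\|^2$.

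The differences are minor. The paper bounds $\sum f(u_{n,m})u_{n,m}\le\phi(\|u\|_{\infty})\|u\|^2$ with $\phi(r)=\sup_{0<|t|\le r}|f(t)|/|t|$, which gives the slightly stronger $\ell^\infty$ lower bound, where you use an $\varepsilon$--$C_\varepsilon$ growth estimate and $\ell^2\subset\ell^p$. Your closedness argument, which uses the bound away from zero, and your remark on $I\in C^1(\ell^2(\mathbb Z^2))$ are more careful than the paper, which appeals only to continuity of $I$.
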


\begin{proof}
We give the proof for $\mathcal N_{k_1,k_2}$; the proof for
$\mathcal N$ is analogous.

\smallskip
\noindent
\emph{Step 1: nonemptiness.}
Let $v\in X_{k_1,k_2}$ be nonzero. Since $\omega<0$, the operator
$L=-\Delta-\omega$ is positive and
\begin{equation}\label{eq:Lpositive_periodic}
(Lv,v)_{k_1,k_2}
\ge
|\omega|\|v\|_{k_1,k_2}^2.
\end{equation}
By {\rm(h1)},
\[
I_{k_1,k_2}(tv)
=
t^2(Lv,v)_{k_1,k_2}
-
\sum_{(n,m)\in Q_{k_1,k_2}}
f(tv_{n,m})\,t v_{n,m}
=
t^2(Lv,v)_{k_1,k_2}
-o(t^2)
\]
as $t\to0^+$. Hence
$\Ds
I_{k_1,k_2}(tv)>0
$
for all sufficiently small $t>0$.

To obtain a change of sign for large $t$, choose $v$ to be a nonzero
constant sequence on $Q_{k_1,k_2}$, extended periodically. Then
$-\Delta v=0$ and therefore
\[
(Lv,v)_{k_1,k_2}
=
-\omega\|v\|_{k_1,k_2}^2.
\]
Using {\rm(h2)}, we obtain
\[
\frac{1}{t^2}I_{k_1,k_2}(tv)
\longrightarrow
\bigl(-\omega-l\bigr)\|v\|_{k_1,k_2}^2
\qquad
\text{as }t\to\infty.
\]
By \eqref{eq:crossing_condition}, $-\omega-l<0$. Hence
$\Ds
I_{k_1,k_2}(tv)<0
$
for all sufficiently large $t>0$.
By continuity of $t\mapsto I_{k_1,k_2}(tv)$, there exists $t^*>0$
such that
$\Ds
I_{k_1,k_2}(t^*v)=0.
$
Thus $t^*v\in\mathcal N_{k_1,k_2}$.

The same argument gives nonemptiness of $\mathcal N$ by choosing a
finitely supported sequence whose Rayleigh quotient for $-\Delta$ is
sufficiently close to zero. This is possible because
$\Ds
\inf\sigma(-\Delta)=0.
$
Then
$\Ds
(Lv,v)<l\|v\|^2,
$
and the same scaling argument applies.

\smallskip
\noindent
\emph{Step 2: regularity and nondegeneracy of the constraint.}
Let $u\in\mathcal N_{k_1,k_2}$. Using
$I_{k_1,k_2}(u)=0$, we compute
\begin{align}
\langle I'_{k_1,k_2}(u),u\rangle
&=
2(Lu,u)_{k_1,k_2}
-
\sum_{(n,m)\in Q_{k_1,k_2}}
\bigl[
f(u_{n,m})+f'(u_{n,m})u_{n,m}
\bigr]u_{n,m}
\nonumber\\
&=
\sum_{(n,m)\in Q_{k_1,k_2}}
\bigl[
f(u_{n,m})u_{n,m}
-
f'(u_{n,m})u_{n,m}^2
\bigr].
\label{eq:Iprime_on_u}
\end{align}
By {\rm(h3)}, each nonzero term in the last sum is strictly negative.
Since $u\neq0$, we obtain
\[
\langle I'_{k_1,k_2}(u),u\rangle<0.
\]
In particular,
$\Ds
I'_{k_1,k_2}(u)\neq0.
$
The implicit function theorem implies that
$\mathcal N_{k_1,k_2}$ is a $C^1$ submanifold of $X_{k_1,k_2}$.
Closedness follows from the continuity of $I_{k_1,k_2}$.
The proof for $\mathcal N$ is identical.

\smallskip
\noindent
\emph{Step 3: boundedness away from the origin.}
Define
\begin{equation}\label{eq:phi}
\phi(r)
=
\sup_{0<|t|\le r}
\frac{|f(t)|}{|t|},
\qquad r>0,
\end{equation}
and set $\phi(0)=0$. By {\rm(h1)}, $\phi(r)\to0$ as $r\to0^+$.

Let $u\in\mathcal N_{k_1,k_2}$. From the Nehari identity,
\[
(Lu,u)_{k_1,k_2}
=
\sum_{(n,m)\in Q_{k_1,k_2}}
f(u_{n,m})u_{n,m}.
\]
Using \eqref{eq:Lpositive_periodic}, we obtain
\[
|\omega|\|u\|_{k_1,k_2}^2
\le
(Lu,u)_{k_1,k_2}
\le
\phi(\|u\|_{\ell^\infty(Q_{k_1,k_2})})
\|u\|_{k_1,k_2}^2.
\]
Since $u\neq0$, this yields
\[
\phi(\|u\|_{\ell^\infty(Q_{k_1,k_2})})
\ge
|\omega|.
\]
Because $\phi(r)\to0$ as $r\to0^+$, there exists $\beta_0>0$,
independent of $k_1,k_2$, such that
\[
\|u\|_{\ell^\infty(Q_{k_1,k_2})}
\ge
\beta_0.
\]
Consequently,
$\Ds
\|u\|_{k_1,k_2}\ge\beta_0.
$ 
The same argument gives the corresponding estimate on $\mathcal N$.
\end{proof}

We next record the fibering property associated with the Nehari
constraint.

\begin{lemma}\label{lem:fibering}
Let $v\in X_{k_1,k_2}\setminus\{0\}$ satisfy
\begin{equation}\label{eq:fibering_condition}
(Lv,v)_{k_1,k_2}
<
l\|v\|_{k_1,k_2}^2.
\end{equation}
Then there exists a unique $t(v)>0$ such that
$\Ds
t(v)v\in\mathcal N_{k_1,k_2}.
$
Moreover,
\[
J_{k_1,k_2}(t(v)v)
=
\max_{t>0}J_{k_1,k_2}(tv).
\]

The analogous statement holds on $X=\ell^2(\mathbb Z^2)$.
\end{lemma}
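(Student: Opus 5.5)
The plan is to reduce everything to the one-variable fibering function
\[
h(t)=J_{k_1,k_2}(tv),\qquad t>0,
\]
and to write its derivative in the form
\[
h'(t)=\langle J'_{k_1,k_2}(tv),v\rangle
=t\Bigl[(Lv,v)_{k_1,k_2}-\sum_{(n,m)\in Q_{k_1,k_2}}\frac{f(tv_{n,m})}{tv_{n,m}}\,v_{n,m}^2\Bigr]
=:t\,g(t),
\]
where terms with $v_{n,m}=0$ are omitted. Since $I_{k_1,k_2}(tv)=t\,h'(t)=t^2g(t)$, we have $tv\in\mathcal N_{k_1,k_2}$ with $t>0$ if and only if $g(t)=0$. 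Thus it suffices to show that $g$ has exactly one zero on $(0,\infty)$, and that $g$ is positive before this zero and negative after it.

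\textbf{Monotonicity.} By Lemma~\ref{lem:monotonicity}, $s\mapsto f(s)/|s|$ is strictly increasing on each half-line. For $v_{n,m}>0$ and $t>0$ we have $f(tv_{n,m})/(tv_{n,m})=f(tv_{n,m})/|tv_{n,m}|$, which is strictly increasing in $t$. For $v_{n,m}<0$, the number $s=tv_{n,m}$ decreases as $t$ increases, so I would use (h3) directly rather than the $|t|$ form. Namely, $\frac{d}{ds}\bigl(f(s)/s\bigr)=(f'(s)s-f(s))/s^2$, and multiplying (h3) by $1/s^2>0$ gives $f'(s)-f(s)/s>0$. Hence $(f'(s)s-f(s))/s^2$ has the sign of $s$, so $f(s)/s$ is decreasing for $s<0$. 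Therefore $f(tv_{n,m})/(tv_{n,m})$ increases in $t$ in this case as well. Since $v\neq0$, at least one coefficient $v_{n,m}^2$ is positive, and so $g$ is continuous and strictly decreasing on $(0,\infty)$.

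\textbf{Limits at $0$ and $\infty$.} As $t\to0^+$, (h1) and the finiteness of $Q_{k_1,k_2}$ give $g(t)\to(Lv,v)_{k_1,k_2}$, which is at least $|\omega|\|v\|^2_{k_1,k_2}>0$ in the gap $\omega<0$. As $t\to\infty$, (h2) gives $g(t)\to(Lv,v)_{k_1,k_2}-l\|v\|^2_{k_1,k_2}$, which is negative by \eqref{eq:fibering_condition}. Strict monotonicity and the intermediate value theorem then yield a unique $t(v)>0$ with $g(t(v))=0$. Moreover $h'>0$ on $(0,t(v))$ and $h'<0$ on $(t(v),\infty)$, so $h$ attains its maximum over $t>0$ exactly at $t(v)$. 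This proves both claims.

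\textbf{The infinite lattice.} On $X=\ell^2(\mathbb Z^2)$ the argument is the same, but the two limits require justification because the sums are infinite. I expect this to be the only step needing care. To pass $t\to0^+$, I would note that $|f(s)/s|\le\phi(r)$ for $|s|\le r$ with $\phi$ as in \eqref{eq:phi}, and $\|tv\|_\infty\to0$, so the series term tends to $0$. To pass $t\to\infty$, (h2) and (h1) together imply that $|f(s)/s|$ is bounded on $\mathbb R$. Dominated convergence with dominating sequence $C v_{n,m}^2\in\ell^1$ then gives $\sum\frac{f(tv_{n,m})}{tv_{n,m}}v_{n,m}^2\to l\|v\|^2$. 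The remaining steps are unchanged.
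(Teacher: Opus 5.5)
Your proof is correct and follows the paper's argument: you reduce to the fibering map $t\mapsto J_{k_1,k_2}(tv)$, factor its derivative as $t$ times a bracket, show the bracket is strictly decreasing with a positive limit at $0^+$ (using $\omega<0$) and a negative limit at infinity by (h1), (h2) and \eqref{eq:fibering_condition}, and identify its unique zero with the Nehari scaling and the global maximum. Two of your steps are welcome refinements of points the paper leaves implicit: your separate treatment of entries $v_{n,m}<0$, where $f(s)/s$ is actually decreasing in $s$ (contrary to the wording in the proof of Lemma~\ref{lem:monotonicity}), and your dominated-convergence justification of both limits on $\ell^2(\mathbb Z^2)$.
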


\begin{proof}
Let
$\Ds
\psi(t)=J_{k_1,k_2}(tv),
$, $ t>0$.

Then
\[
\psi'(t)
=
t
\left[
(Lv,v)_{k_1,k_2}
-
\sum_{(n,m)\in Q_{k_1,k_2}}
\frac{f(tv_{n,m})}{t v_{n,m}}
v_{n,m}^2
\right],
\]
where terms with $v_{n,m}=0$ are omitted.

By {\rm(h1)}, $\psi'(t)>0$ for $t>0$ sufficiently small.
By {\rm(h2)} and \eqref{eq:fibering_condition}, $\psi'(t)<0$ for
$t>0$ sufficiently large. Hence $\psi$ has at least one critical point.

The strict monotonicity of $f(t)/t$ for $t\neq0$, established in
Lemma~\ref{lem:monotonicity}, implies that the expression in brackets
is strictly decreasing in $t>0$. Therefore the critical point is unique.
It is necessarily a strict global maximum.
Since
\[
\psi'(t)=0
\quad\Longleftrightarrow\quad
I_{k_1,k_2}(tv)=0,
\]
the unique critical point is precisely the unique scaling
$t(v)>0$ for which $t(v)v\in\mathcal N_{k_1,k_2}$.
\end{proof}

As a consequence, if $u\in\mathcal N_{k_1,k_2}$, then
\[
J_{k_1,k_2}(u)
=
\max_{t>0}J_{k_1,k_2}(tu).
\]

On the Nehari manifold, the functional has the useful representation
\begin{equation}\label{eq:J_on_Nehari_periodic}
J_{k_1,k_2}(u)
=
J_{k_1,k_2}(u)
-
\frac12 I_{k_1,k_2}(u)
=
\sum_{(n,m)\in Q_{k_1,k_2}}
\left[
\frac12 f(u_{n,m})u_{n,m}
-
F(u_{n,m})
\right],
\end{equation}
for $u\in\mathcal N_{k_1,k_2}$.
Similarly,
\begin{equation}\label{eq:J_on_Nehari_infinite}
J(u)
=
\sum_{(n,m)\in\mathbb Z^2}
\left[
\frac12 f(u_{n,m})u_{n,m}
-
F(u_{n,m})
\right],
\qquad
u\in\mathcal N.
\end{equation}
By Lemma~\ref{lem:monotonicity}, the summands in
\eqref{eq:J_on_Nehari_periodic} and \eqref{eq:J_on_Nehari_infinite}
are nonnegative.

\begin{lemma}\label{lem:positive_lower_bound_periodic}
For every pair $k_1,k_2>1$, there exists
$\alpha_0=\alpha_0(k_1,k_2)>0$ such that
\[
J_{k_1,k_2}(u)\ge \alpha_0
\qquad
\text{for all }u\in\mathcal N_{k_1,k_2}.
\]
\end{lemma}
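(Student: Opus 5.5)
The plan is to use the representation \eqref{eq:J_on_Nehari_periodic}, which writes $J_{k_1,k_2}$ on the Nehari manifold as a sum of the nonnegative quantities $g(t):=\tfrac12 f(t)t-F(t)$. Combining it with the lower bound on the $\ell^\infty$ norm obtained in Step~3 of the proof of Lemma~\ref{lem:nehari_manifold} should give the result.

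First, recall what Step~3 of Lemma~\ref{lem:nehari_manifold} actually proves. For every $u\in\mathcal N_{k_1,k_2}$ we have $\phi(\|u\|_{\ell^\infty(Q_{k_1,k_2})})\ge|\omega|$, and hence $\|u\|_{\ell^\infty(Q_{k_1,k_2})}\ge\beta_0$, where $\beta_0>0$ depends only on $f$ and $\omega$. So for each such $u$ there is a site $(n_0,m_0)\in Q_{k_1,k_2}$ with $|u_{n_0,m_0}|\ge\beta_0$.

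Second, use the monotonicity in Lemma~\ref{lem:monotonicity}: $g$ is strictly increasing on $[0,\infty)$, strictly decreasing on $(-\infty,0]$, and $g(0)=0$. Hence $g(t)>0$ for $t\neq0$ and $g(t)\ge\gamma_0:=\min\{g(\beta_0),g(-\beta_0)\}>0$ whenever $|t|\ge\beta_0$.

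Third, drop every summand in \eqref{eq:J_on_Nehari_periodic} except the one at $(n_0,m_0)$. Since all summands are nonnegative, this gives
\[
J_{k_1,k_2}(u)\ge g(u_{n_0,m_0})\ge\gamma_0=:\alpha_0 .
\]
This bound is in fact independent of $k_1,k_2$, which is stronger than the statement requires. The same argument gives $J\ge\alpha_0$ on $\mathcal N$.

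The only delicate point is making sure Step~3 yields a bound on the \emph{sup} norm, not merely on the $\ell^2$ norm. An $\ell^2$ lower bound alone would not suffice: the mass could spread over many sites, each with a small value of $g$. Step~3 does establish the $\ell^\infty$ bound, through $|\omega|\|u\|^2\le(Lu,u)\le\phi(\|u\|_\infty)\|u\|^2$, using $\omega<0$.

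If one only wants a $(k_1,k_2)$-dependent constant, there is an alternative. Since $\mathcal N_{k_1,k_2}$ lies in a finite-dimensional space, one can argue by contradiction. A minimizing sequence with $J_{k_1,k_2}\to0$ forces $g(u_{n,m})\to0$ at every site, hence $u\to0$ in $\ell^\infty$, and by equivalence of norms on $X_{k_1,k_2}$ also in $\ell^2$. This contradicts \eqref{eq:nehari_away_zero}.
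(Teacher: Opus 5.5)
Your argument is correct, and it differs from the paper's in one substantive way. The paper uses only the stated conclusion \eqref{eq:nehari_away_zero}, the $\ell^2$ bound $\|u\|_{k_1,k_2}\ge\beta_0$. It then applies a pigeonhole step over the finite box to find a site with $|u_{n_0,m_0}|\ge\beta_0|Q_{k_1,k_2}|^{-1/2}=:\delta_0$, and takes $\alpha_0=\frac12 f(\delta_0)\delta_0-F(\delta_0)$. This is why its constant depends on $(k_1,k_2)$ and degenerates as the periods grow. Your fallback argument by contradiction is essentially the same route, and it shows that your remark "an $\ell^2$ bound alone would not suffice" is true only for the uniform version, not for the lemma as stated.

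Your main route instead reaches inside the proof of Lemma~\ref{lem:nehari_manifold} for the intermediate estimate $\|u\|_{\ell^\infty(Q_{k_1,k_2})}\ge\beta_0$, which is proved there but not recorded in the statement. This costs nothing extra and buys a constant independent of $k_1,k_2$. The same argument applies on $\mathcal N$, because the supremum of a nonzero $\ell^2(\mathbb Z^2)$ sequence is attained. So you get $m\ge\alpha_0>0$ directly, rather than deferring it to the compactness argument of Section~\ref{sec7} as the paper does.

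Your choice $\min\{g(\beta_0),g(-\beta_0)\}$ also repairs a small oversight in the paper. Its bound $g(\delta_0)$ controls $g(u_{n_0,m_0})$ only when $u_{n_0,m_0}>0$ or $f$ is odd, since without oddness $g$ need not be even.
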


\begin{proof}
By Lemma~\ref{lem:nehari_manifold}, there exists $\beta_0>0$ such that
\[
\|u\|_{k_1,k_2}\ge\beta_0
\qquad
\text{for all }u\in\mathcal N_{k_1,k_2}.
\]
Since $Q_{k_1,k_2}$ is finite, there exists
$(n_0,m_0)\in Q_{k_1,k_2}$ such that
\[
|u_{n_0,m_0}|
\ge
\frac{\beta_0}{|Q_{k_1,k_2}|^{1/2}}
=:\delta_0.
\]
Using \eqref{eq:J_on_Nehari_periodic} and the monotonicity property
from Lemma~\ref{lem:monotonicity}, we obtain
\[
J_{k_1,k_2}(u)
\ge
\frac12 f(\delta_0)\delta_0
-
F(\delta_0)
=:
\alpha_0.
\]
Since $\delta_0>0$, assumption {\rm(h3)} implies $\alpha_0>0$.
\end{proof}

The tangent spaces to the Nehari manifolds are given by
\begin{equation}\label{eq:tangent_periodic}
T_u\mathcal N_{k_1,k_2}
=
\left\{
v\in X_{k_1,k_2}:
\langle I'_{k_1,k_2}(u),v\rangle=0
\right\},
\end{equation}
and
\begin{equation}\label{eq:tangent_infinite}
T_u\mathcal N
=
\left\{
v\in X:
\langle I'(u),v\rangle=0
\right\}.
\end{equation}
Moreover, by \eqref{eq:Iprime_on_u}, the radial direction $u$ is
transverse to the corresponding Nehari manifold.

\begin{lemma}\label{lem:natural_constraint}
The Nehari manifolds $\mathcal N_{k_1,k_2}$ and $\mathcal N$ are
natural constraints for the functionals $J_{k_1,k_2}$ and $J$,
respectively.
In particular, if $u\in\mathcal N_{k_1,k_2}$ is a critical point of
$J_{k_1,k_2}$ restricted to $\mathcal N_{k_1,k_2}$, then
$\Ds
J'_{k_1,k_2}(u)=0.
$
Similarly, if $u\in\mathcal N$ is a critical point of $J$ restricted to
$\mathcal N$, then
$\Ds
J'(u)=0.
$
\end{lemma}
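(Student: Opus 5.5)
The argument is the standard Lagrange multiplier one, using the transversality computed in \eqref{eq:Iprime_on_u}. I give it for $\mathcal N_{k_1,k_2}$; the case of $\mathcal N$ on $X=\ell^2(\mathbb Z^2)$ is word for word the same, since only the $C^1$ structure and the sign computation enter.

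Let $u\in\mathcal N_{k_1,k_2}$ be a critical point of $J_{k_1,k_2}|_{\mathcal N_{k_1,k_2}}$. By Lemma~\ref{lem:nehari_manifold}, $\mathcal N_{k_1,k_2}=I_{k_1,k_2}^{-1}(0)\setminus\{0\}$ is a $C^1$ submanifold and $I'_{k_1,k_2}(u)\neq0$. Its tangent space is given by \eqref{eq:tangent_periodic}, and $u$ is critical on it, so $\langle J'_{k_1,k_2}(u),v\rangle=0$ for all $v\in\ker I'_{k_1,k_2}(u)$. Thus the linear functional $J'_{k_1,k_2}(u)$ vanishes on the kernel of the nonzero functional $I'_{k_1,k_2}(u)$. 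By elementary linear algebra (the Lagrange multiplier rule), there is $\lambda\in\mathbb R$ with
\[
J'_{k_1,k_2}(u)=\lambda\, I'_{k_1,k_2}(u).
\]

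Next I test this identity with $v=u$. The left-hand side equals $\langle J'_{k_1,k_2}(u),u\rangle=I_{k_1,k_2}(u)=0$, because $u\in\mathcal N_{k_1,k_2}$. The right-hand side equals $\lambda\langle I'_{k_1,k_2}(u),u\rangle$. By \eqref{eq:Iprime_on_u} together with (h3) and $u\neq0$,
\[
\langle I'_{k_1,k_2}(u),u\rangle=\sum_{(n,m)\in Q_{k_1,k_2}}\bigl[f(u_{n,m})u_{n,m}-f'(u_{n,m})u_{n,m}^2\bigr]<0 .
\]
Hence $\lambda=0$, and so $J'_{k_1,k_2}(u)=0$.

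On $X$ the multiplier step needs one check: the closed hyperplane $\ker I'(u)$ has codimension one, and $u$ is a complement to it. This holds because $\langle I'(u),u\rangle<0$. Every $w\in X$ then decomposes as
\[
w=\Bigl(w-\tfrac{\langle I'(u),w\rangle}{\langle I'(u),u\rangle}u\Bigr)+\tfrac{\langle I'(u),w\rangle}{\langle I'(u),u\rangle}u,
\]
with the first summand in $\ker I'(u)$. Applying $J'(u)$ and using that it vanishes on $\ker I'(u)$ gives $J'(u)=\lambda I'(u)$ with $\lambda=\langle J'(u),u\rangle/\langle I'(u),u\rangle$.

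For $\langle I'(u),u\rangle$ on $\mathbb Z^2$ to be well defined and strictly negative, the series must converge. This follows from $u\in\ell^2$ and $f'$ being bounded on bounded sets (recall $\|u\|_\infty\le\|u\|$). Strict negativity holds because at least one term is nonzero. With $\lambda=\langle J'(u),u\rangle/\langle I'(u),u\rangle$ and $\langle J'(u),u\rangle=I(u)=0$, we again get $\lambda=0$ and $J'(u)=0$.

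The only point I expect to require care is this justification of the multiplier on the infinite-dimensional space, namely the well-definedness of $I'$ and the convergence of the series in \eqref{eq:Iprime}. Both follow from $f\in C^1$ and the embedding $\ell^2\subset\ell^\infty$.
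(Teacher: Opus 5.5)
Your proof is correct and follows the paper's argument. You write $J'(u)=\lambda I'(u)$ by the Lagrange multiplier rule, test with $u$ so that $I(u)=0$ appears on the left, and use $\langle I'(u),u\rangle<0$ from \eqref{eq:Iprime_on_u} and (h3) to force $\lambda=0$. Your extra checks on $\ell^2(\mathbb Z^2)$ (convergence of the series and the complement decomposition) just spell out details the paper leaves implicit. The implication from $\ker I'(u)\subset\ker J'(u)$ with $I'(u)\neq0$ to $J'(u)=\lambda I'(u)$ is purely algebraic and holds in any dimension.
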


\begin{proof}
We prove the periodic case; the infinite-dimensional case is identical.

Let $u\in\mathcal N_{k_1,k_2}$ be a critical point of
$J_{k_1,k_2}$ restricted to $\mathcal N_{k_1,k_2}$. By the Lagrange
multiplier principle, there exists $\lambda\in\mathbb R$ such that
\[
J'_{k_1,k_2}(u)
=
\lambda I'_{k_1,k_2}(u).
\]
Taking the duality product with $u$ gives
\[
0
=
\langle J'_{k_1,k_2}(u),u\rangle
=
\lambda
\langle I'_{k_1,k_2}(u),u\rangle.
\]
By \eqref{eq:Iprime_on_u},
$\Ds
\langle I'_{k_1,k_2}(u),u\rangle<0.
$
Hence $\lambda=0$, and therefore
$\Ds
J'_{k_1,k_2}(u)=0.
$
\end{proof}

We now define the ground state energy levels
\begin{equation}\label{eq:groundlevels}
m_{k_1,k_2}
=
\inf
\left\{
J_{k_1,k_2}(v):
v\in\mathcal N_{k_1,k_2}
\right\},
\qquad
m
=
\inf
\left\{
J(v):
v\in\mathcal N
\right\}.
\end{equation}

By Lemma~\ref{lem:positive_lower_bound_periodic},
$\Ds
m_{k_1,k_2}>0.
$
The corresponding positivity of $m$ will follow in the course of the
infinite-lattice compactness argument.

The Nehari manifold framework developed in this section provides the
variational setting for the construction of ground states. In the next
section we use the finite-dimensional compactness of $X_{k_1,k_2}$ to
prove the existence of periodic ground state solutions.
\section{Existence of periodic ground states}
\label{sec6}

In this section we establish the existence of periodic ground states for
equation \eqref{eq:main2D}. The finite--dimensional character of the
periodic space $X_{k_1,k_2}$ allows the direct minimization of the
action functional on the corresponding Nehari manifold.

We consider the minimization problem

\begin{equation}\label{eq:periodic_level}
m_{k_1,k_2}
=
\inf
\left\{
J_{k_1,k_2}(v):
v\in\mathcal N_{k_1,k_2}
\right\}.
\end{equation}

\begin{lemma}\label{lem:periodic_minimizer}
Under the assumptions of
Theorem~\ref{thm:periodic_ground_states},
the value $m_{k_1,k_2}$ is attained.
\end{lemma}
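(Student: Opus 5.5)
Since $X_{k_1,k_2}$ is finite dimensional, the plan is to minimize $J_{k_1,k_2}$ directly on $\mathcal N_{k_1,k_2}$ and extract a convergent subsequence. The only nontrivial ingredient is a priori boundedness of minimizing sequences.

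\emph{Setting up the minimizing sequence.} Let $(u^j)\subset\mathcal N_{k_1,k_2}$ satisfy $J_{k_1,k_2}(u^j)\to m_{k_1,k_2}$. This is possible because $\mathcal N_{k_1,k_2}\neq\emptyset$ by Lemma~\ref{lem:nehari_manifold}. By \eqref{eq:J_on_Nehari_periodic},
\[
J_{k_1,k_2}(u^j)=\sum_{(n,m)\in Q_{k_1,k_2}}\Bigl[\tfrac12 f(u^j_{n,m})u^j_{n,m}-F(u^j_{n,m})\Bigr]\le C
\]
for all $j$.

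\emph{Boundedness (the main step).} By Lemma~\ref{lem:monotonicity}, each summand is nonnegative. Hence every single summand is bounded by $C$. Assumption (h4) says that $\frac12 f(t)t-F(t)\to\infty$ as $|t|\to\infty$. Therefore there is $R>0$ with $|u^j_{n,m}|\le R$ for all $j$ and all $(n,m)\in Q_{k_1,k_2}$. Since $Q_{k_1,k_2}$ is finite, $\|u^j\|_{k_1,k_2}\le R|Q_{k_1,k_2}|^{1/2}$. This avoids any contradiction argument with normalized sequences, which would have been the delicate part in infinite dimensions. In the periodic setting, (h4) together with finiteness of $Q_{k_1,k_2}$ does all the work.

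\emph{Compactness and the limit lies on $\mathcal N_{k_1,k_2}$.} By finite dimensionality, a subsequence converges, $u^j\to u$ in $X_{k_1,k_2}$. The functionals $I_{k_1,k_2}$ and $J_{k_1,k_2}$ are continuous, so $I_{k_1,k_2}(u)=0$ and $J_{k_1,k_2}(u)=m_{k_1,k_2}$. Lemma~\ref{lem:nehari_manifold} gives $\|u^j\|_{k_1,k_2}\ge\beta_0$, which passes to the limit and yields $u\neq0$. Hence $u\in\mathcal N_{k_1,k_2}$ (alternatively, use that the Nehari manifold is closed and bounded away from zero), and the infimum is attained.

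Finally, combining this with Lemma~\ref{lem:natural_constraint}, the minimizer $u$ is a critical point of $J_{k_1,k_2}$ on the whole space. This is what will be used for Theorem~\ref{thm:periodic_ground_states}, though it is not needed for the lemma itself.
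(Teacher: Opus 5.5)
Your proof is correct and follows the paper's argument. The representation \eqref{eq:J_on_Nehari_periodic} together with (h4) gives a uniform $\ell^\infty$ bound on the minimizing sequence, finite dimensionality then yields a convergent subsequence, and continuity of $I_{k_1,k_2}$ and $J_{k_1,k_2}$ carries the constraint and the energy to the limit. You also make two steps explicit that the paper only asserts: you bound each summand individually using its nonnegativity, and you rule out $u=0$ via $\|u^j\|_{k_1,k_2}\ge\beta_0$ from Lemma~\ref{lem:nehari_manifold}, whereas the paper simply says that $\mathcal N_{k_1,k_2}$ is closed and that (h4) gives boundedness.
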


\begin{proof}
Let $\{u_j\}\subset\mathcal N_{k_1,k_2}$ be a minimizing sequence such
that

\[
J_{k_1,k_2}(u_j)
\longrightarrow
m_{k_1,k_2}.
\]

Using the representation formula
\eqref{eq:J_on_Nehari_periodic}, we obtain

\[
J_{k_1,k_2}(u_j)
=
\sum_{(n,m)\in Q_{k_1,k_2}}
\left[
\frac12 f(u_{j;n,m})u_{j;n,m}
-
F(u_{j;n,m})
\right].
\]

Assumption {\rm(h4)} implies that the sequence
$\{u_j\}$ is bounded in $\ell^\infty(Q_{k_1,k_2})$.
Since $X_{k_1,k_2}$ is finite dimensional, all norms are equivalent,
and therefore $\{u_j\}$ is bounded in $X_{k_1,k_2}$.

Passing to a subsequence if necessary, we may assume that

\[
u_j\to u
\qquad
\text{in }
X_{k_1,k_2}.
\]

Since $\mathcal N_{k_1,k_2}$ is closed and
$J_{k_1,k_2}$ is continuous, it follows that
$u\in\mathcal N_{k_1,k_2}$ and

\[
J_{k_1,k_2}(u)
=
m_{k_1,k_2}.
\]

Hence the infimum is attained.
\end{proof}

We are now in a position to prove
Theorem~\ref{thm:periodic_ground_states}.

\begin{proof}[Proof of Theorem~\ref{thm:periodic_ground_states}]
By Lemma~\ref{lem:periodic_minimizer},
there exists a minimizer
$u\in\mathcal N_{k_1,k_2}$ such that

\[
J_{k_1,k_2}(u)
=
m_{k_1,k_2}.
\]

Since the Nehari manifold is a natural constraint
for the functional $J_{k_1,k_2}$
(Lemma~\ref{lem:natural_constraint}),
the minimizer $u$ is a critical point of
$J_{k_1,k_2}$, namely
$\Ds
J'_{k_1,k_2}(u)=0.
$

Therefore $u$ is a nontrivial
$(k_1,k_2)$--periodic solution of
equation \eqref{eq:main2D}.

By construction, $u$ minimizes the action among all
nontrivial periodic solutions and is therefore a
ground state.

Assume now that $f$ is odd.
Then $F$ is even and

\[
f(|t|)|t|
=
f(t)t,
\qquad
F(|t|)
=
F(t).
\]

Furthermore, the convexity properties of the discrete
Dirichlet form imply that

\[
(-\Delta |u|,|u|)_{k_1,k_2}
\le
(-\Delta u,u)_{k_1,k_2}.
\]

Consequently,

\[
I_{k_1,k_2}(|u|)
\le
I_{k_1,k_2}(u)
=
0.
\]

By Lemma~\ref{lem:fibering},
there exists a unique number $t^*\in(0,1]$
such that

\[
u^*
=
t^*|u|
\in
\mathcal N_{k_1,k_2}.
\]

Using the monotonicity property established in
Lemma~\ref{lem:monotonicity}, we obtain

\[
J_{k_1,k_2}(u^*)
\le
J_{k_1,k_2}(u)
=
m_{k_1,k_2}.
\]

Hence
$\Ds
J_{k_1,k_2}(u^*)
=
m_{k_1,k_2},
$
and therefore $u^*$ is a nonnegative ground state.
Replacing $u$ by $u^*$ if necessary, we may assume
that $u\ge0$.
Consequently, the periodic ground state may be chosen nonnegative.
This completes the proof.

\end{proof}
The finite--dimensional character of the space
$X_{k_1,k_2}$ ensures compactness of minimizing
sequences and allows the direct construction of
periodic ground states.

In the next section we pass to the limit
$k_1,k_2\to\infty$ and establish the existence of
exponentially localized ground states on the infinite
lattice $\mathbb Z^2$.
\section{Existence of localized ground states}
\label{sec7}

In this section we prove Theorem~\ref{thm:localized_ground_state}.
Throughout the proof we work in the lower spectral gap $\omega<0$.
The corresponding result in the upper spectral gap $\omega>8$ follows
from the staggering transformation described in Remark~\ref{rem:staggering}.

Let $u_{k_1,k_2}\in\mathcal N_{k_1,k_2}$ be the periodic ground states
constructed in Section~\ref{sec6}, satisfying
\begin{equation}\label{eq:periodic_ground_level_sec7}
J_{k_1,k_2}(u_{k_1,k_2})
=
m_{k_1,k_2}.
\end{equation}
We first establish uniform bounds for this family.

\begin{lemma}\label{lem:uniform_bounds}
There exists a constant $C>0$, independent of $k_1$ and $k_2$, such that
\begin{equation}\label{eq:uniform_bounds}
m_{k_1,k_2}\le C,
\qquad
\|u_{k_1,k_2}\|_{k_1,k_2}\le C,
\end{equation}
for all $k_1,k_2>1$.
\end{lemma}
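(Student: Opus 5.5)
The plan is to prove the two bounds in \eqref{eq:uniform_bounds} separately. First I bound $m_{k_1,k_2}$ from above using explicit test functions together with the fibering Lemma~\ref{lem:fibering}. Then I convert the energy bound into an $\ell^\infty$ bound and finally into an $\ell^2$ bound, using the representation \eqref{eq:J_on_Nehari_periodic} and the Nehari identity.

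\emph{Upper bound for $m_{k_1,k_2}$.} As in Step~1 of Lemma~\ref{lem:nehari_manifold}, fix a finitely supported $w\in\ell^2(\mathbb Z^2)$ with $(Lw,w)<l\|w\|^2$. Let $\operatorname{supp}w$ lie in a square of side $R$. If $k_1,k_2\ge R+2$, the $(k_1,k_2)$-periodization $\tilde w$ of $w$ has non-interacting copies. Hence $(L\tilde w,\tilde w)_{k_1,k_2}=(Lw,w)$ and $\sum_{Q_{k_1,k_2}}F(t\tilde w)=\sum_{\mathbb Z^2}F(tw)$, so that $J_{k_1,k_2}(t\tilde w)=J(tw)$. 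Lemma~\ref{lem:fibering} then gives
\[
m_{k_1,k_2}\le\max_{t>0}J(tw)=:C_1<\infty .
\]

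The remaining pairs, where some $k_i<K_0:=R+2$, are infinitely many, and I expect them to be the main nuisance. For them I use test functions that are constant in the short direction. If $k_1<K_0\le k_2$, take $v_{n,m}=w^{(1)}_m$, where $w^{(1)}$ is a finitely supported one-dimensional sequence with $(-\Delta_1 w^{(1)},w^{(1)})-\omega\|w^{(1)}\|^2<l\|w^{(1)}\|^2$; such $w^{(1)}$ exists since $\inf\sigma(-\Delta_1)=0$ and $l+\omega>0$. Then $J_{k_1,k_2}(tv)=k_1\,J_{1D}(tw^{(1)})$, so $m_{k_1,k_2}\le K_0 C_2$. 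The case $k_2<K_0\le k_1$ is symmetric. If both $k_i<K_0$, a constant sequence gives $m_{k_1,k_2}\le K_0^2\max_{t>0}\bigl(-\tfrac{\omega}{2}t^2-F(t)\bigr)$, which is finite by (h2) and $l+\omega>0$. Together these cases give $m_{k_1,k_2}\le C$.

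\emph{$\ell^\infty$ and $\ell^2$ bounds.} Set $g(t)=\frac12f(t)t-F(t)$. By Lemma~\ref{lem:monotonicity}, $g\ge0$, $g(0)=0$, and $g$ is strictly increasing in $|t|$; by (h4), $g(t)\to\infty$ as $|t|\to\infty$. Since $g(u_{n,m})\le m_{k_1,k_2}\le C$ at every site, we get $\|u_{k_1,k_2}\|_\infty\le R_0$ uniformly.

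Next choose $\delta>0$ with $\phi(\delta)\le|\omega|/2$, where $\phi$ is as in \eqref{eq:phi}. Split $Q_{k_1,k_2}$ into small sites, where $|u|<\delta$, and large sites. On the large sites $\delta\le|t|\le R_0$, continuity and positivity of $g$ give $g(t)\ge c\,t^2$ with $c=\min_{\delta\le|t|\le R_0}g(t)/R_0^2>0$. Hence
\[
\sum_{\text{large}}u^2\le C/c .
\]
The Nehari identity together with \eqref{eq:Lpositive_periodic} yields
\[
|\omega|\|u\|^2\le\sum f(u)u\le\tfrac{|\omega|}{2}\|u\|^2+\phi(R_0)\sum_{\text{large}}u^2 .
\]
Therefore $\|u\|^2\le 2\phi(R_0)C/(c|\omega|)$, which is independent of $k_1,k_2$.
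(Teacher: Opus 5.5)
Your proof is correct, and for the $\ell^2$ bound it takes a genuinely different route from the paper's.

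\emph{Energy bound.} Here you use the paper's idea: periodize a finitely supported $w$ with $(Lw,w)<l\|w\|^2$ and apply Lemma~\ref{lem:fibering}. The paper only treats $k_1,k_2$ large, although the lemma is stated for all $k_1,k_2>1$. Your test functions, constant in the short direction or constant altogether when both periods are short, close that gap. One small fix: take $K_0$ to be $2$ plus the larger of the support widths of $w$ and $w^{(1)}$. Otherwise the periodization of $w^{(1)}$ in the long direction need not be non-interacting.

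\emph{Norm bound.} The paper argues by contradiction. It normalizes $v=u/\|u\|$ and, in the nonvanishing case, translates and passes to a nontrivial $\ell^2$ solution of $-\Delta v-(\omega+l)v=0$, contradicting $l\notin\sigma(L)$. The vanishing case is dismissed by appeal to (h1). Your argument is direct and quantitative:
\begin{itemize}
\item the energy bound together with (h4) gives a uniform $\ell^\infty$ bound;
\item monotonicity of $\frac12 f(t)t-F(t)$ controls the mass on the large sites;
\item the Nehari identity with $(Lu,u)\ge|\omega|\|u\|^2$ absorbs the small sites.
\end{itemize}
It uses neither the non-resonance condition $l\notin\sigma(L)$ nor any compactness. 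It also sidesteps a weak point in the paper's vanishing case. When $\|u\|\to\infty$, $\|v\|_\infty\to0$ does not make the entries of $u=\|u\|v$ small; a constant $u$ on a growing torus is a counterexample. So (h1) alone cannot give $\sum f(u)u=o(\|u\|^2)$, and the energy bound you exploit is exactly the missing ingredient.

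The normalization/dichotomy argument is the standard template for asymptotically linear problems and the natural one when $L$ is indefinite. In the present definite setting ($\omega<0$, or $\omega>8$ via staggering), your estimate is shorter, gives an explicit constant, and needs fewer hypotheses.
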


\begin{proof}
We first show that the levels $m_{k_1,k_2}$ are uniformly bounded
from above.

Since $\omega<0$, the bottom of the spectrum of $L=-\Delta-\omega$ is
$-\omega$. In the lower spectral gap the relevant crossing condition is
$l>-\omega$. Hence we may choose $\delta$ such that
$-\omega<\delta<l$.

Choose a nontrivial finitely supported sequence $w\in\ell^2(\mathbb Z^2)$
such that
\begin{equation}\label{eq:test_rayleigh}
(Lw,w)<\delta\|w\|^2<l\|w\|^2.
\end{equation}
This is possible because $\inf\sigma(L)=-\omega$.

By Lemma~\ref{lem:fibering}, there exists a unique $t_0>0$ such that
$t_0w\in\mathcal N$. Since $w$ has finite support, for sufficiently large
$k_1,k_2$ it can be identified with an element of $X_{k_1,k_2}$.
Projecting this element onto $\mathcal N_{k_1,k_2}$ by the fibering map,
we obtain $v_{k_1,k_2}\in\mathcal N_{k_1,k_2}$ such that
\begin{equation}\label{eq:periodic_test_bound}
J_{k_1,k_2}(v_{k_1,k_2})
=
J(t_0w)+o(1).
\end{equation}
Therefore
\[
m_{k_1,k_2}
\le
J_{k_1,k_2}(v_{k_1,k_2})
\le C
\]
for some constant $C>0$ independent of $k_1,k_2$.

It remains to prove the uniform boundedness of
$\|u_{k_1,k_2}\|_{k_1,k_2}$.
Suppose, by contradiction, that
\[
\|u_{k_1,k_2}\|_{k_1,k_2}\to\infty.
\]
Define
\begin{equation}\label{eq:normalized_periodic}
v_{k_1,k_2}
=
\frac{u_{k_1,k_2}}
{\|u_{k_1,k_2}\|_{k_1,k_2}},
\qquad
\|v_{k_1,k_2}\|_{k_1,k_2}=1.
\end{equation}

We distinguish two alternatives.

\smallskip
\noindent
\emph{ Vanishing.}
Assume that
\[
\|v_{k_1,k_2}\|_{\ell^\infty(Q_{k_1,k_2})}\to0.
\]
Using assumption {\rm(h1)}, we obtain
\[
\sum_{(n,m)\in Q_{k_1,k_2}}
f(u_{k_1,k_2;n,m})u_{k_1,k_2;n,m}
=
o\!\left(
\|u_{k_1,k_2}\|_{k_1,k_2}^{\,2}
\right).
\]
Since $u_{k_1,k_2}\in\mathcal N_{k_1,k_2}$,
\[
(Lu_{k_1,k_2},u_{k_1,k_2})_{k_1,k_2}
=
\sum_{(n,m)\in Q_{k_1,k_2}}
f(u_{k_1,k_2;n,m})u_{k_1,k_2;n,m}.
\]
Dividing by $\|u_{k_1,k_2}\|_{k_1,k_2}^{\,2}$ and using the positivity of
$L$ in the lower spectral gap gives a contradiction.

\smallskip
\noindent
\emph{ Nonvanishing.}
Assume that vanishing does not occur. Then there exist $\eta>0$ and
lattice points $(n_k,m_k)\in\mathbb Z^2$ such that
$\Ds
|v_{k_1,k_2;n_k,m_k}|\ge\eta.
$
Define the shifted sequence
\begin{equation}\label{eq:shifted_normalized}
\widetilde v_{k_1,k_2;n,m}
=
v_{k_1,k_2;n+n_k,m+m_k}.
\end{equation}
Passing to a subsequence, we may assume that
\[
\widetilde v_{k_1,k_2}
\rightharpoonup v
\quad\text{weakly in }\ell^2(\mathbb Z^2),
\]
with $v\neq0$.

Dividing equation \eqref{eq:main2D} by
$\|u_{k_1,k_2}\|_{k_1,k_2}$ and passing to the limit, using {\rm(h2)}, we obtain
\begin{equation}\label{eq:limit_linearized_infty}
-\Delta v-(\omega+l)v=0.
\end{equation}
Thus $v$ is a nontrivial $\ell^2(\mathbb Z^2)$ solution of the linear equation
\eqref{eq:limit_linearized_infty}. This is impossible because it would imply
$l\in\sigma(L)$, contradicting \eqref{eq:spectral_assumption}.

Therefore both alternatives are impossible. Hence
$\{u_{k_1,k_2}\}$ is uniformly bounded in $X_{k_1,k_2}$.
\end{proof}

We now complete the proof of the existence of a localized ground state.

\begin{proof}[Proof of Theorem~\ref{thm:localized_ground_state}]
By Lemma~\ref{lem:uniform_bounds}, the sequence
$\{u_{k_1,k_2}\}$ is uniformly bounded.

The vanishing alternative is excluded by the argument used in the proof of
Lemma~\ref{lem:uniform_bounds}. Hence there exist lattice translations
$(n_k,m_k)\in\mathbb Z^2$ such that the shifted sequence
\begin{equation}\label{eq:shifted_ground_states}
w_{k_1,k_2;n,m}
=
u_{k_1,k_2;n+n_k,m+m_k}
\end{equation}
satisfies
\[
w_{k_1,k_2}
\rightharpoonup w
\quad\text{weakly in }\ell^2(\mathbb Z^2),
\]
with $w\neq0$.

Using the Nehari identity together with standard concentration--compactness arguments,
we obtain
\begin{equation}\label{eq:liminf_energy}
\liminf_{k_1,k_2\to\infty}
m_{k_1,k_2}
\ge
J(w).
\end{equation}
In particular, $w\in\mathcal N$.

To prove the reverse inequality, let $\varepsilon>0$ and choose
$z\in\mathcal N$ such that $J(z)\le m+\varepsilon$.
Let $z_N$ denote the truncation of $z$ to a finite box centered at the
origin. For $N$ sufficiently large and then for $k_1,k_2$ sufficiently
large, $z_N$ can be regarded as an element of $X_{k_1,k_2}$.
Projecting $z_N$ onto $\mathcal N_{k_1,k_2}$ by the fibering map, we obtain
\[
m_{k_1,k_2}
\le
J(z)+2\varepsilon.
\]
Since $\varepsilon>0$ is arbitrary, it follows that
\begin{equation}\label{eq:limsup_energy}
\limsup_{k_1,k_2\to\infty}
m_{k_1,k_2}
\le
m.
\end{equation}

Combining \eqref{eq:liminf_energy} and \eqref{eq:limsup_energy}, we obtain
$J(w)=m$. Therefore $w$ is a ground state solution of equation
\eqref{eq:main2D}.
Finally, exponential localization follows from Lemma~\ref{lem:expdecay},
which yields the estimate \eqref{eq:expdecay}. This completes the proof.
\end{proof}

The previous theorem establishes the existence of exponentially localized
ground states on the infinite lattice $\mathbb Z^2$. These solutions arise
as limits of periodic ground states defined on finite lattices.
In the next section we strengthen this result by proving that, up to lattice
translations, the periodic ground states converge strongly in
$\ell^2(\mathbb Z^2)$ to the localized ground state.
\section{Global convergence of periodic ground states}
\label{sec8}

In this section we prove Theorem~\ref{thm:strong_convergence}.
Building upon the compactness analysis of Section~\ref{sec7}, we show
that periodic ground states converge, up to lattice translations,
strongly in $\ell^2(\mathbb Z^2)$ to the localized ground state of the
infinite lattice problem.

The key step consists in excluding the possibility of energy splitting.
More precisely, we show that a minimizing sequence cannot decompose into
two or more spatially separated nontrivial profiles. This concentration
property upgrades weak convergence to strong convergence and yields
global convergence of periodic ground states.

\begin{proof}[Proof of Theorem~\ref{thm:strong_convergence}]
Let $u_{k_1,k_2}\in\mathcal N_{k_1,k_2}$ be periodic ground state
solutions satisfying
\begin{equation}\label{eq:periodic_energy_level}
J_{k_1,k_2}(u_{k_1,k_2})
=
m_{k_1,k_2}.
\end{equation}
By Lemma~\ref{lem:uniform_bounds}, the sequence
$\{u_{k_1,k_2}\}$ is bounded.
The nonvanishing alternative established in the proof of
Lemma~\ref{lem:uniform_bounds} implies the existence of a constant
$\delta>0$ and lattice points $(n_k,m_k)\in\mathbb Z^2$ such that
\[
|u_{k_1,k_2;n_k,m_k}|\ge \delta.
\]
Define the translated sequence
\[
v_{k_1,k_2;n,m}
=
u_{k_1,k_2;n+n_k,m+m_k}.
\]
Since $\{v_{k_1,k_2}\}$ remains bounded in $\ell^2(\mathbb Z^2)$, there
exists $v\in\ell^2(\mathbb Z^2)$ such that, up to a subsequence,
\[
v_{k_1,k_2}
\rightharpoonup v
\qquad
\text{weakly in }\ell^2(\mathbb Z^2),
\]
and
\[
v_{k_1,k_2;n,m}
\longrightarrow
v_{n,m}
\qquad
\text{for every }(n,m)\in\mathbb Z^2.
\]
Moreover, $v\neq0$ by construction.

Since $v_{k_1,k_2}$ is obtained from $u_{k_1,k_2}$ by lattice
translations, the Nehari identity is asymptotically preserved.
Consequently,
$\Ds
I(v_{k_1,k_2})=o(1).
$
Passing to the limit and using the Brezis--Lieb decomposition together with the asymptotically linear assumption (h2), we obtain $I(v)=0$. Hence $v\in\mathcal N$.

By weak lower semicontinuity,
\[
J(v)
\le
\liminf_{k_1,k_2\to\infty}
J(v_{k_1,k_2})
=
\liminf_{k_1,k_2\to\infty}
m_{k_1,k_2}.
\]
On the other hand, by \eqref{eq:liminf_energy}--\eqref{eq:limsup_energy}
proved in Section~\ref{sec7},
\[
\lim_{k_1,k_2\to\infty}
m_{k_1,k_2}
=
m.
\]
Since $v\in\mathcal N$, we conclude that $J(v)=m$. Therefore $v$ is a
ground state solution of equation \eqref{eq:main2D}.

We now prove strong convergence. Define
\[
w_{k_1,k_2}
=
v_{k_1,k_2}-v.
\]
Then $w_{k_1,k_2}\rightharpoonup0$ in $\ell^2(\mathbb Z^2)$.
We claim that
\[
w_{k_1,k_2}
\longrightarrow0
\qquad
\text{strongly in }\ell^2(\mathbb Z^2).
\]

The following decompositions are consequences of the discrete
Brezis--Lieb lemma together with the asymptotically linear growth
assumption {\rm(h2)}:
\[
\sum_{(n,m)\in\mathbb Z^2}
F(v_{k_1,k_2;n,m})
=
\sum_{(n,m)\in\mathbb Z^2}
F(v_{n,m})
+
\sum_{(n,m)\in\mathbb Z^2}
F(w_{k_1,k_2;n,m})
+
o(1),
\]
and
\[
\sum_{(n,m)\in\mathbb Z^2}
f(v_{k_1,k_2;n,m})v_{k_1,k_2;n,m}
=
\sum_{(n,m)\in\mathbb Z^2}
f(v_{n,m})v_{n,m}
+
\sum_{(n,m)\in\mathbb Z^2}
f(w_{k_1,k_2;n,m})w_{k_1,k_2;n,m}
+
o(1).
\]
Consequently,
\[
J(v_{k_1,k_2})
=
J(v)+J(w_{k_1,k_2})+o(1),
\]
and
\[
I(v_{k_1,k_2})
=
I(v)+I(w_{k_1,k_2})+o(1).
\]
Since $I(v_{k_1,k_2})=o(1)$ and $I(v)=0$, it follows that
$\Ds
I(w_{k_1,k_2})=o(1).
$

Suppose, by contradiction, that $w_{k_1,k_2}$ does not converge strongly
to zero. Then the nonvanishing alternative applies once more. Hence
there exist lattice points $(\widetilde n_k,\widetilde m_k)\in\mathbb Z^2$
such that the translated sequence
\[
\widetilde w_{k_1,k_2;n,m}
=
w_{k_1,k_2;n+\widetilde n_k,m+\widetilde m_k}
\]
admits a nontrivial weak limit $\widetilde w\neq0$.

Passing to the limit in the weak formulation of equation
\eqref{eq:main2D}, we conclude that $\widetilde w$ is a nontrivial
solution of \eqref{eq:main2D}. Furthermore, $\widetilde w\in\mathcal N$.
By the definition of the ground state level, $J(\widetilde w)\ge m$.

Using the energy decomposition above, we obtain
\[
\liminf_{k_1,k_2\to\infty}
J(v_{k_1,k_2})
\ge
J(v)+J(\widetilde w)
\ge
2m.
\]
This contradicts
\[
J(v_{k_1,k_2})
=
m_{k_1,k_2}
\longrightarrow
m.
\]
Therefore the assumption was false, and we conclude that
\begin{equation}\label{eq:strong_remainder}
\|w_{k_1,k_2}\|
\longrightarrow
0.
\end{equation}
Hence
\[
v_{k_1,k_2}
\longrightarrow
v
\qquad
\text{strongly in }\ell^2(\mathbb Z^2).
\]

Undoing the lattice translations, we obtain
\[
u_{k_1,k_2}
(\cdot+n_k,\cdot+m_k)
\longrightarrow
v
\qquad
\text{strongly in }\ell^2(\mathbb Z^2).
\]
Therefore periodic ground states converge strongly, up to lattice
translations, to the localized ground state of the infinite lattice
problem.
\end{proof}

\begin{remark}
The above argument also excludes any multi--bump splitting of the
minimizing sequence. Indeed, if two or more nontrivial concentration
profiles were present, the Brezis--Lieb decomposition would yield
\[
\liminf_{k_1,k_2\to\infty}
J(v_{k_1,k_2})
\ge
2m,
\]
contradicting the convergence $J(v_{k_1,k_2})\to m$.
Consequently, the minimizing sequence concentrates, up to lattice
translations, into a single localized ground state profile.
\end{remark}

The results of this section complete the passage from finite periodic
lattices to the infinite lattice $\mathbb Z^2$. In particular, periodic
ground states provide a convergent approximation scheme for the
localized ground states of the infinite DNLS problem.
\section{Conditional orbital stability of localized standing waves}
\label{sec9}

In this section we place the localized ground states constructed in the
previous sections within the classical
Grillakis--Shatah--Strauss (GSS) stability framework
\cite{r19,r19a}.

We consider the focusing DNLS equation

\begin{equation}\label{eq:DNLS_dyn}
i\dot{\psi}_{n,m}
+
\Delta\psi_{n,m}
+
\frac{\nu|\psi_{n,m}|^2}
     {1+\mu|\psi_{n,m}|^2}
\psi_{n,m}
=
0,
\qquad
(n,m)\in\mathbb Z^2,
\end{equation}

where $\nu>0$ and $\mu>0$.

Throughout this section we consider frequencies belonging to a spectral
gap of the discrete Laplacian. For definiteness, the analysis is
presented for $\omega<0$. The corresponding results for $\omega>8$
follow from the staggering transformation of
Remark~\ref{rem:staggering}.

The equation possesses the conserved Hamiltonian

\begin{equation}\label{eq:Hamiltonian}
H(\psi)
=
\frac12
\sum_{(n,m)\in\mathbb Z^2}
|\nabla\psi_{n,m}|^2
-
\sum_{(n,m)\in\mathbb Z^2}
F(|\psi_{n,m}|^2),
\end{equation}

where

\[
F'(s)
=
\frac{\nu s}{1+\mu s},
\]

and the conserved mass

\begin{equation}\label{eq:mass}
M(\psi)
=
\sum_{(n,m)\in\mathbb Z^2}
|\psi_{n,m}|^2.
\end{equation}

The flow is invariant under the phase symmetry
$\Ds
\psi
\mapsto
e^{i\theta}\psi .
$
Standing waves are solutions of the form
$\Ds
\psi_{n,m}(t)
=
e^{-i\omega t}\phi_{n,m},
$

where the profile $\phi\in\ell^2(\mathbb Z^2)$ satisfies

\begin{equation}\label{eq:standing_profile}
-\Delta\phi_{n,m}
-\omega\phi_{n,m}
=
\frac{\nu\phi_{n,m}^3}
     {1+\mu\phi_{n,m}^2}.
\end{equation}

Let

\[
E(\psi)=H(\psi),
\qquad
Q(\psi)=\frac12 M(\psi),
\]

and define the action functional

\[
S_\omega(\psi)
=
E(\psi)+\omega Q(\psi).
\]

Critical points of $S_\omega$ correspond precisely to standing wave
profiles of frequency $\omega$.

The localized ground state $\phi_\omega$ constructed in
Sections~\ref{sec6}--\ref{sec8} satisfies
$\Ds
S_\omega'(\phi_\omega)=0.
$
To analyze stability, we linearize around the standing wave by writing

\[
\psi_{n,m}(t)
=
e^{-i\omega t}
\bigl(
\phi_{n,m}
+r_{n,m}(t)
+i\,s_{n,m}(t)
\bigr),
\]

where $r$ and $s$ are real-valued perturbations.

The second variation of $S_\omega$ at $\phi_\omega$ takes the form

\[
\delta^2S_\omega
=
\langle L_+r,r\rangle
+
\langle L_-s,s\rangle,
\]

where

\begin{equation}\label{eq:Lminus}
(L_-s)_{n,m}
=
-\Delta s_{n,m}
-\omega s_{n,m}
-
\frac{\nu\phi_{n,m}^2}
     {1+\mu\phi_{n,m}^2}
s_{n,m},
\end{equation}

and

\begin{equation}\label{eq:Lplus}
(L_+r)_{n,m}
=
-\Delta r_{n,m}
-\omega r_{n,m}
-
\nu
\frac{\phi_{n,m}^2
(3+\mu\phi_{n,m}^2)}
{(1+\mu\phi_{n,m}^2)^2}
r_{n,m}.
\end{equation}

Since $\phi_\omega$ decays exponentially,
the operators $L_\pm$ are compact perturbations of
$-\Delta-\omega$.
\begin{lemma}
The operators $L_\pm$ are bounded self--adjoint operators on
$\ell^2(\mathbb Z^2)$ and

\[
\sigma_{\mathrm{ess}}(L_\pm)
=
[-\omega,\,8-\omega].
\]
\end{lemma}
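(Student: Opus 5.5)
The plan is to write $L_\pm=L-V_\pm$, where $L=-\Delta-\omega$ and $V_\pm$ are the diagonal multiplication operators by the real potentials
\[
V_-(n,m)=\frac{\nu\phi_{n,m}^2}{1+\mu\phi_{n,m}^2},
\qquad
V_+(n,m)=\nu\frac{\phi_{n,m}^2(3+\mu\phi_{n,m}^2)}{(1+\mu\phi_{n,m}^2)^2}.
\]
Boundedness and self--adjointness then come directly. $L$ is bounded and self--adjoint by Section~\ref{sec4}. Since $\phi\in\ell^2(\mathbb Z^2)\subset\ell^\infty(\mathbb Z^2)$, both potentials are bounded real sequences. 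In fact $0\le V_-\le \nu/\mu$, and $V_+$ is bounded because $s(3+\mu s)/(1+\mu s)^2$ is bounded on $[0,\infty)$. A bounded real diagonal operator is bounded and self--adjoint, so $L_\pm$ are bounded and self--adjoint.

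Next I will show that $V_\pm$ are compact. By Theorem~\ref{thm:localized_ground_state} (equivalently Lemma~\ref{lem:expdecay}), $|\phi_{n,m}|\le Ce^{-\alpha|(n,m)|}$. Hence $|V_\pm(n,m)|\le C'\phi_{n,m}^2\to0$ as $|(n,m)|\to\infty$. Let $P_R$ be the truncation of $V_\pm$ to the box $\{|(n,m)|\le R\}$. Each $P_R$ has finite rank, and
\[
\|V_\pm-P_R\|=\sup_{|(n,m)|>R}|V_\pm(n,m)|\to0.
\]
So $V_\pm$ is a norm limit of finite rank operators and is therefore compact. Only $V_\pm\in c_0$ is needed here, not the exponential rate.

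Weyl's theorem on the invariance of the essential spectrum under compact self--adjoint perturbations then gives $\sigma_{\mathrm{ess}}(L_\pm)=\sigma_{\mathrm{ess}}(L)$. It remains to identify $\sigma_{\mathrm{ess}}(L)$. By \eqref{eq:spectrumL}, $\sigma(L)=[-\omega,8-\omega]$. This spectrum is purely absolutely continuous (the Fourier symbol $\lambda(\theta_1,\theta_2)-\omega$ has no level sets of positive measure), so it has no isolated eigenvalues of finite multiplicity. In a direct way: for any $\lambda_0$ in the interval I can build Weyl sequences. Take normalized plane waves $e^{i(\theta_1 n+\theta_2 m)}$ with $\lambda(\theta)=\lambda_0+\omega$, cut them off smoothly on boxes of growing size, and translate the supports off to infinity so the sequences are orthonormal and tend weakly to $0$. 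Hence $\sigma_{\mathrm{ess}}(L)=[-\omega,8-\omega]$.

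The only step needing any care is this identification of $\sigma_{\mathrm{ess}}(L)$ with the whole band. The Weyl sequence construction is standard, and the error from the cutoff is $O(R^{-1})$ in norm because $-\Delta$ is a finite-difference operator. The compactness step is routine once decay of $\phi$ is invoked from the earlier theorem.
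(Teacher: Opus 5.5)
Your proposal is correct and follows the same route as the paper. You write $L_\pm$ as $-\Delta-\omega$ minus a decaying real multiplication operator, note that this multiplication operator is compact, and apply Weyl's theorem together with $\sigma(L)=[-\omega,8-\omega]$. You also supply details the paper leaves implicit, all of which are sound: the uniform bounds on the potentials, the finite-rank approximation showing that $\phi\in\ell^2\subset c_0$ already suffices without the exponential rate, and the identification $\sigma_{\mathrm{ess}}(L)=\sigma(L)$, which holds simply because an interval has no isolated points.
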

\begin{proof}
The discrete Laplacian is bounded and self--adjoint on
$\ell^2(\mathbb Z^2)$.
Since the potentials in \eqref{eq:Lminus} and \eqref{eq:Lplus}
decay exponentially, they define compact multiplication operators.
The conclusion follows from Weyl's theorem.
\end{proof}
\begin{lemma}
The operator $L_-$ satisfies
$\Ds
L_-\phi_\omega=0.
$
\end{lemma}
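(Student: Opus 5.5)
This is a direct substitution of $\phi_\omega$ into the definition \eqref{eq:Lminus} of $L_-$, followed by an appeal to the profile equation \eqref{eq:standing_profile}. The plan is to evaluate $(L_-\phi_\omega)_{n,m}$ one lattice site at a time.

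First, fix $(n,m)\in\mathbb Z^2$ and set $s=\phi_\omega$ in \eqref{eq:Lminus}. This gives
\[
(L_-\phi_\omega)_{n,m}
=
-\Delta\phi_{n,m}-\omega\phi_{n,m}
-\frac{\nu\phi_{n,m}^2}{1+\mu\phi_{n,m}^2}\,\phi_{n,m}
=
-\Delta\phi_{n,m}-\omega\phi_{n,m}
-\frac{\nu\phi_{n,m}^3}{1+\mu\phi_{n,m}^2},
\]
where $\phi=\phi_\omega$. The potential term is exactly the saturable nonlinearity $f(\phi_{n,m})$ from \eqref{eq:saturablenonlinearity}. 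This is the structural fact behind the lemma: the $L_-$ potential is $f(t)/t$ evaluated at $t=\phi_{n,m}$.

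Second, since $\phi_\omega$ is a critical point of $S_\omega$, it solves \eqref{eq:standing_profile}, which is \eqref{eq:main2D} with the saturable $f$. This follows from Theorem~\ref{thm:localized_ground_state} together with the natural-constraint Lemma~\ref{lem:natural_constraint}. The right-hand side above therefore vanishes at every site, so $L_-\phi_\omega=0$ as an identity in $\ell^2(\mathbb Z^2)$.

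Third, I will note that $\phi_\omega\in\ell^2(\mathbb Z^2)$ and $\phi_\omega\neq0$, since it lies on $\mathcal N$ and so has $\|\phi_\omega\|\ge\beta_0$ by Lemma~\ref{lem:nehari_manifold}. Hence $\phi_\omega$ is a genuine eigenvector, and $0$ is an eigenvalue of $L_-$ lying below $\sigma_{\mathrm{ess}}(L_-)=[-\omega,8-\omega]$. This reflects the phase invariance $\psi\mapsto e^{i\theta}\psi$: differentiating the orbit $e^{i\theta}\phi_\omega$ at $\theta=0$ gives the kernel direction $i\phi_\omega$, which corresponds to $s=\phi_\omega$.

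There is no real obstacle. The only point needing care is consistency of conventions: the profile must be real-valued, and $F$ in \eqref{eq:Hamiltonian} must be normalised so that the Euler--Lagrange equation of $S_\omega$ matches \eqref{eq:standing_profile}. I will take \eqref{eq:standing_profile} as the defining equation, as stated, so this normalisation does not affect the argument.
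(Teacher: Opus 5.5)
Your proposal is correct and follows the paper's argument: setting $s=\phi_\omega$ in the definition of $L_-$ turns the potential term into $f(\phi_{n,m})=\nu\phi_{n,m}^3/(1+\mu\phi_{n,m}^2)$, so $(L_-\phi_\omega)_{n,m}$ is exactly the residual of the profile equation and vanishes at every site. Your extra remarks (nontriviality, the zero eigenvalue below $[-\omega,8-\omega]$, and the phase-invariance interpretation) are accurate, and your caveat about conventions is justified: with the paper's normalisation of $F$ in $H$ and $S_\omega=E+\omega Q$, the Euler--Lagrange equation does not literally reproduce the profile equation (the $\omega$ term has the wrong sign and the nonlinear term picks up a factor), so resting the argument on the profile equation, equivalently on $J'(\phi_\omega)=0$ from the construction, is the right choice.
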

\begin{proof}
Substituting equation \eqref{eq:standing_profile}
into the definition of $L_-$ yields the result.
\end{proof}
The above identity reflects the phase invariance of the DNLS equation.
The purpose of this section is not to establish a complete spectral
analysis of the linearized operators. Rather, we formulate a
conditional orbital stability result within the GSS framework.

\begin{theorem}[Conditional orbital stability]
\label{thm:conditional_stability}
Assume that the following spectral properties hold:
\begin{enumerate}
\item[(i)]
$\ker(L_-)=\mathrm{span}\{\phi_\omega\}$;
\item[(ii)]
$L_+$ possesses exactly one simple negative eigenvalue;
\item[(iii)]
$\ker(L_+)=\{0\}$;
\end{enumerate}
and assume in addition the slope condition
\begin{equation}\label{eq:VK}
\frac{d}{d\omega}
M(\phi_\omega)
<
0.
\end{equation}
Then the standing wave
$\Ds
\psi_{n,m}(t)
=
e^{-i\omega t}\phi_{n,m}
$
is orbitally stable in $\ell^2(\mathbb Z^2)$.
\end{theorem}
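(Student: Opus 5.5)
The plan is to follow the Grillakis--Shatah--Strauss Lyapunov-function argument, taking $V(\psi)=S_\omega(\psi)-S_\omega(\phi_\omega)$ and controlling it near the orbit $\{e^{i\theta}\phi_\omega\}$. Set $d(\omega)=S_\omega(\phi_\omega)$.

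\emph{Step 0: global well-posedness.} I first record that \eqref{eq:DNLS_dyn} is globally well posed in $\ell^2(\mathbb Z^2)$ and conserves $H$ and $M$. The vector field $\psi\mapsto i\Delta\psi+i\frac{\nu|\psi|^2}{1+\mu|\psi|^2}\psi$ is globally Lipschitz on $\ell^2(\mathbb Z^2)$. This holds because $\Delta$ is bounded and the saturable map $z\mapsto \frac{\nu|z|^2}{1+\mu|z|^2}z$ is globally Lipschitz on $\mathbb C$, with constant comparable to $|\nu|/\mu=|l|$. Picard iteration then gives global $C^1$ solutions. Conservation of $H$ and $M$ follows by direct differentiation, since all sums converge absolutely in $\ell^2$ and there is no regularity issue on the lattice.

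\emph{Step 1: the negative direction and the slope condition.} Differentiating \eqref{eq:standing_profile} in $\omega$, assuming $\omega\mapsto\phi_\omega$ is $C^1$ (implicit in \eqref{eq:VK}), gives
\[
L_+\partial_\omega\phi_\omega=\phi_\omega .
\]
By (iii) this means $L_+^{-1}\phi_\omega=\partial_\omega\phi_\omega$, and hence
\[
\langle L_+^{-1}\phi_\omega,\phi_\omega\rangle=\langle\partial_\omega\phi_\omega,\phi_\omega\rangle=\tfrac12\tfrac{d}{d\omega}M(\phi_\omega)<0
\]
by \eqref{eq:VK}. Combined with (ii), a standard min--max lemma then yields the constrained positivity of $L_+$:
\[
\inf\{\langle L_+r,r\rangle:\ \|r\|=1,\ \langle r,\phi_\omega\rangle=0\}>0 .
\]
Strict positivity, and not only nonnegativity, requires that this infimum be attained or stay away from the essential spectrum. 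Here I use the earlier lemma: $\sigma_{\mathrm{ess}}(L_+)=[-\omega,8-\omega]$ lies in $(0,\infty)$ for $\omega<0$, so below $-\omega$ there are only finitely many isolated eigenvalues.

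\emph{Step 2: the $L_-$ part.} Similarly, since $\sigma_{\mathrm{ess}}(L_-)\subset(0,\infty)$ and $\phi_\omega>0$ spans the kernel by (i), $L_-\ge0$ holds by a Perron--Frobenius/ground-state argument. Strictly, I would simply add nonnegativity of $L_-$ to the hypotheses if the positivity from Lemma~\ref{lem:positive_green} does not transfer directly. This gives $\langle L_-s,s\rangle\ge c\|s\|^2$ for $s\perp\phi_\omega$.

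\emph{Step 3: coercivity of $V$ modulo the orbit.} For $\psi$ in a small $\ell^2$-neighbourhood of the orbit, choose $\theta(\psi)$ minimizing $\|e^{-i\theta}\psi-\phi_\omega\|$. By the implicit function theorem, which is applicable since $\phi_\omega\ne0$, this produces the orthogonality condition $\langle s,\phi_\omega\rangle=0$ for $e^{-i\theta}\psi=\phi_\omega+r+is$. Decompose $r=a\phi_\omega+r_\perp$. Taylor expansion of $S_\omega$, which is $C^2$ on $\ell^2$ because $F$ is smooth with bounded second derivatives, gives
\[
V=\tfrac12\langle L_+r,r\rangle+\tfrac12\langle L_-s,s\rangle+o(\|r\|^2+\|s\|^2).
\]
Restrict to perturbations with $M(\psi)=M(\phi_\omega)$. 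Then $a=O(\|r\|^2+\|s\|^2)$, so Steps 1--2 give
\[
V\ge c\,\inf_\theta\|\psi-e^{i\theta}\phi_\omega\|^2 .
\]
Removing the mass constraint uses the family $\phi_{\omega'}$ with $\omega'$ near $\omega$, as in GSS, since $M(\phi_{\omega'})$ sweeps a neighbourhood of $M(\phi_\omega)$ by \eqref{eq:VK}.

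\emph{Step 4: conclusion.} Conservation of $S_\omega$ along the flow, together with continuity in time, then gives orbital stability by the usual contradiction argument.

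I expect the main obstacle to be Step 3. Specifically, it is the uniformity of the $o(\cdot)$ remainder, and the transfer of the constrained positivity into a genuine lower bound, while the modulation parameter $\theta$ and the mass constraint are handled simultaneously. If this becomes messy, I would invoke the abstract GSS theorem \cite{r19} directly. Its hypotheses (well-posedness, one negative eigenvalue of the Hessian, kernel generated by the symmetry, and $d''(\omega)>0$ in their sign convention) are exactly Step 0, (i)--(iii), and $d''(\omega)=-\tfrac12\frac{d}{d\omega}M(\phi_\omega)>0$. The last identity follows from $d'(\omega)=Q(\phi_\omega)$ under the present convention $S_\omega=E+\omega Q$ with $e^{-i\omega t}$, once the sign flip between this convention and that of \cite{r19} is checked carefully.
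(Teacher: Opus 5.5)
Your proposal is correct and follows the same Grillakis--Shatah--Strauss Lyapunov-functional route as the paper. The paper's proof only asserts in a few lines that (i)--(iii) and \eqref{eq:VK} verify the hypotheses of \cite{r19,r19a}; your Steps 0--4 unpack that citation. Your Step 2 also supplies $L_-\ge 0$, which GSS needs but (i)--(iii) do not state. It follows from your Perron--Frobenius argument, because the nonnegative ground state is strictly positive by Lemma~\ref{lem:positive_green}, so you do not need to add it as a hypothesis.

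The one slip is the sign bookkeeping in your fallback. Since the profile equation reads $E'(\phi_\omega)=\omega Q'(\phi_\omega)$, the Lyapunov functional is $E-\omega Q$, not the paper's $E+\omega Q$. Hence $d'(\omega)=-Q(\phi_\omega)$ and $d''(\omega)=-\tfrac12\tfrac{d}{d\omega}M(\phi_\omega)>0$, whereas your stated $d'(\omega)=Q(\phi_\omega)$ would give the opposite sign. Your direct identity $\langle L_+^{-1}\phi_\omega,\phi_\omega\rangle=\tfrac12\tfrac{d}{d\omega}M(\phi_\omega)<0$ in Step 1 is the correct one and is all the argument actually uses.
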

\begin{proof}
Under assumptions (i)--(iii), the linearized operators satisfy the
spectral hypotheses of the Grillakis--Shatah--Strauss theory
\cite{r19,r19a}.
Condition \eqref{eq:VK} is precisely the Vakhitov--Kolokolov slope
condition.

Therefore the constrained Hessian of the action functional is
positive on the subspace orthogonal to the symmetry direction.
The action functional $S_\omega$ acts as a Lyapunov functional in a
neighborhood of the orbit

\[
\mathcal O(\phi_\omega)
=
\{
e^{i\theta}\phi_\omega :
\theta\in\mathbb R
\}.
\]

Since both energy and mass are conserved by the DNLS flow,
solutions starting sufficiently close to
$\mathcal O(\phi_\omega)$ remain close to this orbit for all times.
Hence the standing wave is orbitally stable.
\end{proof}

\begin{remark}
A complete verification of the spectral assumptions of
Theorem~\ref{thm:conditional_stability} for the present saturable
DNLS model lies beyond the scope of this work.
The theorem should therefore be viewed as placing the localized
ground states constructed in the previous sections within the
classical Grillakis--Shatah--Strauss framework.
\end{remark}

Stability properties of discrete solitons in DNLS lattices have been
extensively investigated in the mathematical physics literature; see,
for example, 
\cite{PelinovskyKevrekidisFrantzeskakis2005} and the references therein.
The above result indicates that the localized ground states obtained
through the variational construction are natural candidates for
dynamically robust coherent structures of the two--dimensional
saturable DNLS equation.
In the final section we present numerical computations illustrating
the analytical results and the convergence of periodic ground states
toward the localized infinite--lattice profile.
\section{Numerical Results}
\label{sec10}

The numerical experiments presented in this section illustrate the
analytical results established in
Theorems~\ref{thm:periodic_ground_states}--
\ref{thm:strong_convergence}.
In particular, they provide numerical evidence for the existence of
localized ground states, their exponential spatial decay, and the strong
convergence of periodic approximations toward the infinite--lattice
profile.

The numerical diagnostics presented below are designed to test the
structural properties predicted by the theory, including the
single--bump character of ground states, the exponential spatial decay
of localized solutions, and the strong $\ell^2(\mathbb Z^2)$ convergence
of periodic approximations. The numerical experiments therefore serve
to illustrate and quantitatively support the analytical results.

All computations are performed on square lattices with periodic
boundary conditions. Periodic ground states are computed by minimizing
the discrete action functional on the associated Nehari manifold.
Unless otherwise stated, we fix $\mu=1$ and $\nu=10$.
The lattice size is chosen sufficiently large so that boundary effects
are negligible in the core region of the solutions.

We begin by validating the convergence of periodic ground states to
localized $\ell^2(\mathbb Z^2)$ solutions as the lattice period
increases. For increasing lattice sizes $N\times N$, we compute the
periodic ground state $u_N$ and compare it to a reference solution
$u_{\mathrm{ref}}$ obtained on a very large lattice.
Figure~\ref{fig:convergence} displays the relative $\ell^2$ error
$\|u_N-u_{\mathrm{ref}}\|_2/\|u_{\mathrm{ref}}\|_2$ as a function of $N$.
The error decays rapidly with $N$, providing numerical evidence for the
strong convergence result stated in Theorem~\ref{thm:strong_convergence}.

\begin{figure}[!t]
\centering
\includegraphics[width=0.50\textwidth]{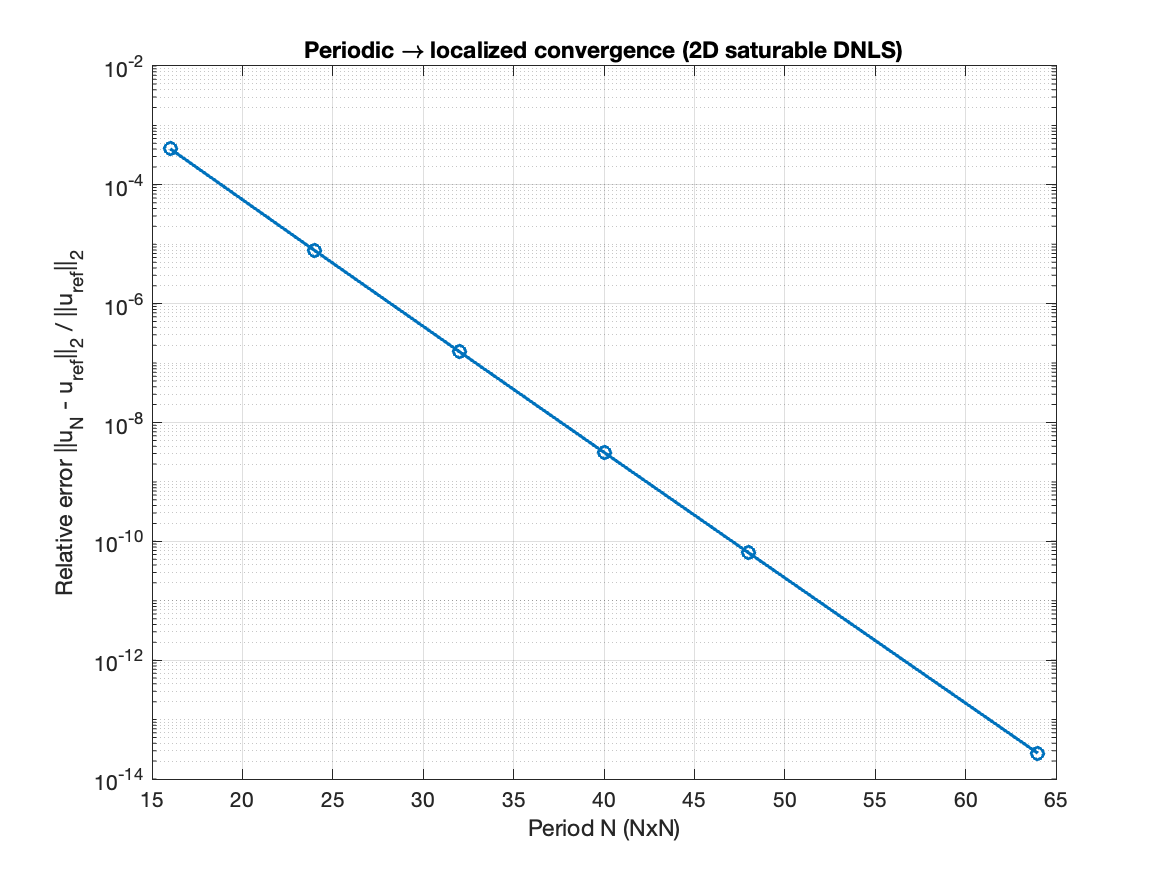}
\caption{Convergence of periodic ground states to a localized solution.
Relative $\ell^2$ error $\|u_N-u_{\mathrm{ref}}\|_2/\|u_{\mathrm{ref}}\|_2$
as a function of the period $N$.}
\label{fig:convergence}
\end{figure}
\FloatBarrier

Next, we investigate the dependence of the ground state branch on the
frequency parameter $\omega$.
Figure~\ref{fig:branch} shows the $\ell^2$ norm $\|u\|_2$ and the
maximal amplitude $\max_{n,m}|u_{n,m}|$ as functions of $\omega$ for a
fixed lattice size $N=96$.
As $\omega$ approaches the spectral edge $\omega=0$ from below, the
solutions become broader and their amplitude decreases. For more
negative values of $\omega$, the solutions become increasingly localized
and have larger peak amplitude. This behavior is consistent with the
variational structure of the problem and the saturable character of the
nonlinearity.
\begin{figure}[!t]
\centering
\includegraphics[width=0.68\textwidth]{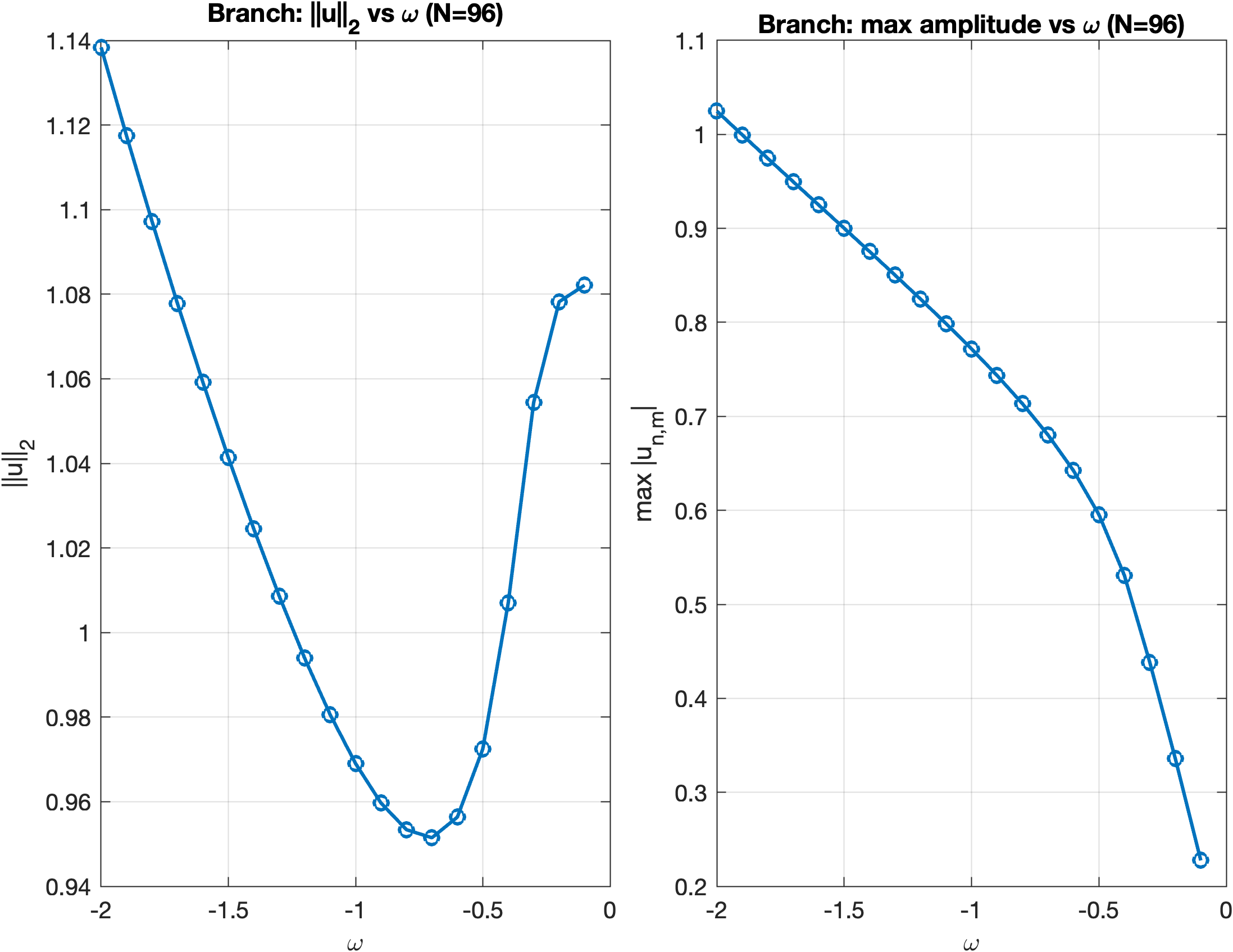}
\caption{Ground state branch for $N=96$.
Left: $\ell^2$ norm $\|u\|_2$ versus $\omega$.
Right: maximal amplitude $\max_{n,m}|u_{n,m}|$ versus $\omega$.}
\label{fig:branch}
\end{figure}

\begin{figure}[!t]
\centering
\includegraphics[width=0.98\textwidth]{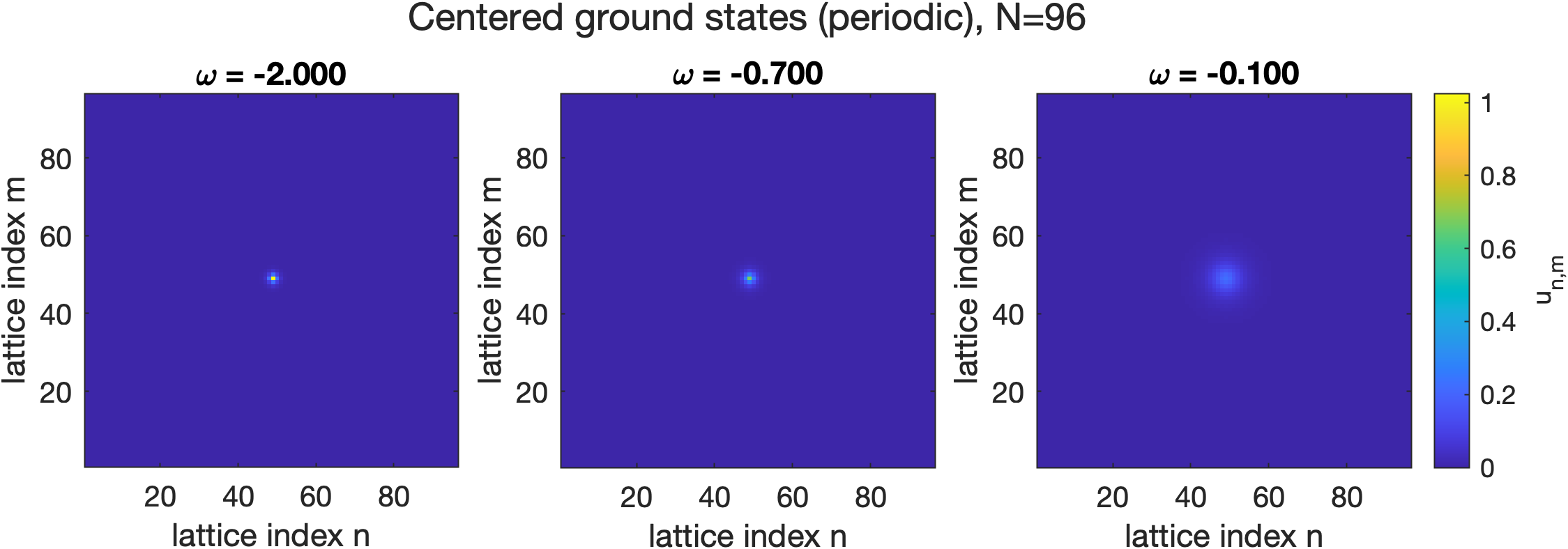}
\caption{Centered periodic ground states on a $96\times96$ lattice at
$\omega=-2.0,-0.7,-0.1$.}
\label{fig:heatmaps}
\end{figure}

\FloatBarrier

The spatial structure of the ground states is illustrated in
Figure~\ref{fig:heatmaps}, where centered periodic ground states are
shown for representative values of $\omega$.
In all cases the solutions exhibit a single--site dominant peak
surrounded by rapidly decaying tails, confirming the single--bump
structure predicted by the variational analysis. As $\omega$ decreases,
the localization becomes stronger and the solution concentrates more
sharply around its center.

To further illustrate the geometry of the solutions,
Figure~\ref{fig:surface} presents a three--dimensional surface plot of
the ground state for $\omega=-0.1$.
The profile exhibits a sharply localized peak with rapid spatial decay
away from the center, providing a direct visualization of localization
on the lattice.

\begin{figure}[!t]
\centering
\includegraphics[width=0.55\textwidth]{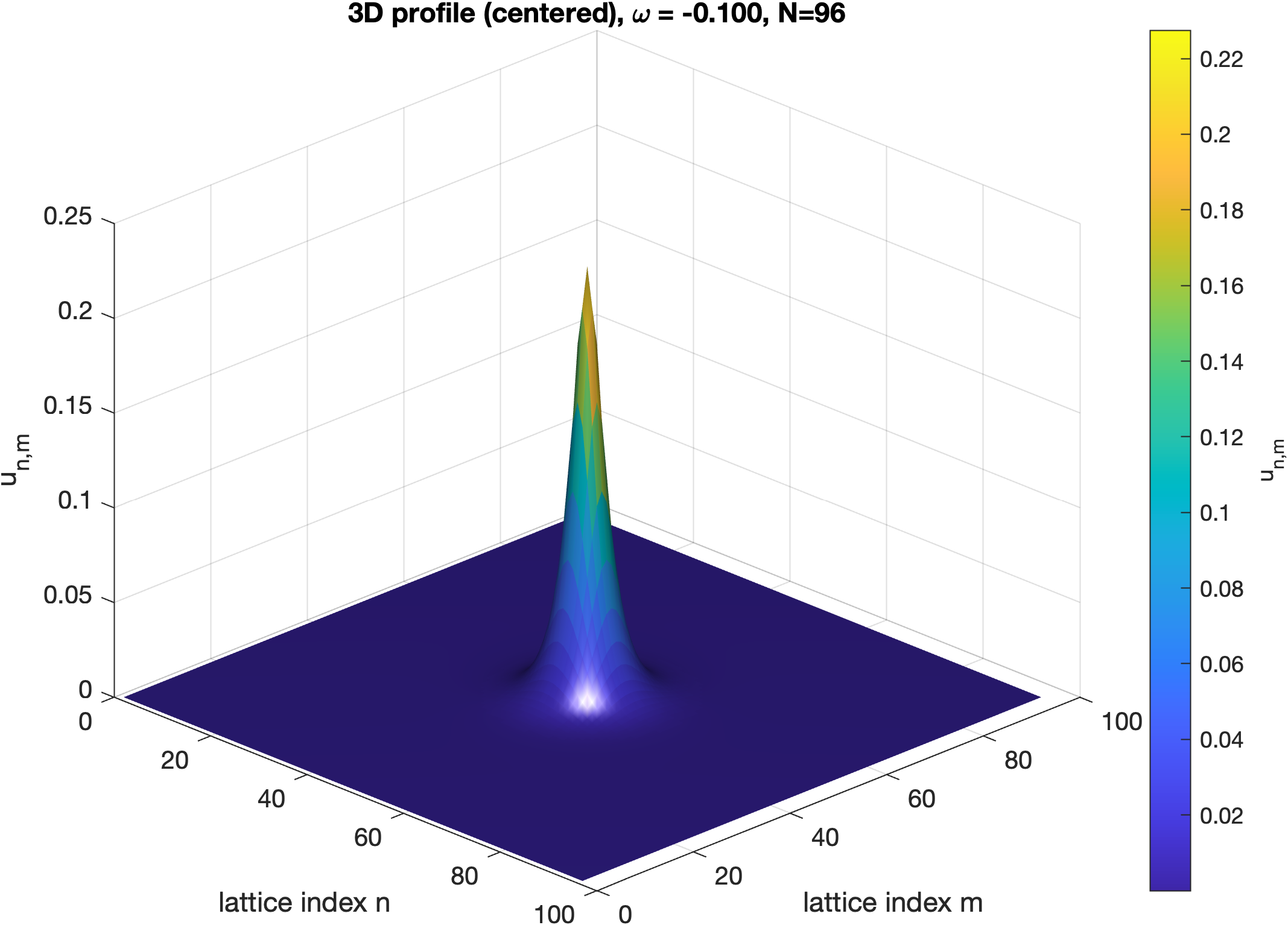}
\caption{Three--dimensional surface plot of the centered ground state
for $\omega=-0.1$ on a $96\times96$ lattice.}
\label{fig:surface}
\end{figure}
\FloatBarrier

A more refined numerical inspection reveals additional qualitative
properties of the computed ground states. In particular, all solutions
along the ground state branch exhibit a clear single--bump structure:
after suitable lattice translations, each solution possesses a unique
dominant maximum located at the center of the lattice, while any
secondary local maxima are several orders of magnitude smaller. This
observation is consistent with the variational characterization of
ground states as energy minimizers on the Nehari manifold.

We also examine the spatial decay of the ground states by constructing
radial profiles centered at the dominant peak and plotting their
magnitude on a logarithmic scale. The resulting curves exhibit an
approximately linear behavior in the far field, indicating exponential
decay. A least--squares fit of the tails yields decay rates that depend
smoothly on $\omega$, with stronger localization corresponding to more
negative values of $\omega$.

\begin{figure}[!t]
\centering
\includegraphics[width=0.32\textwidth]{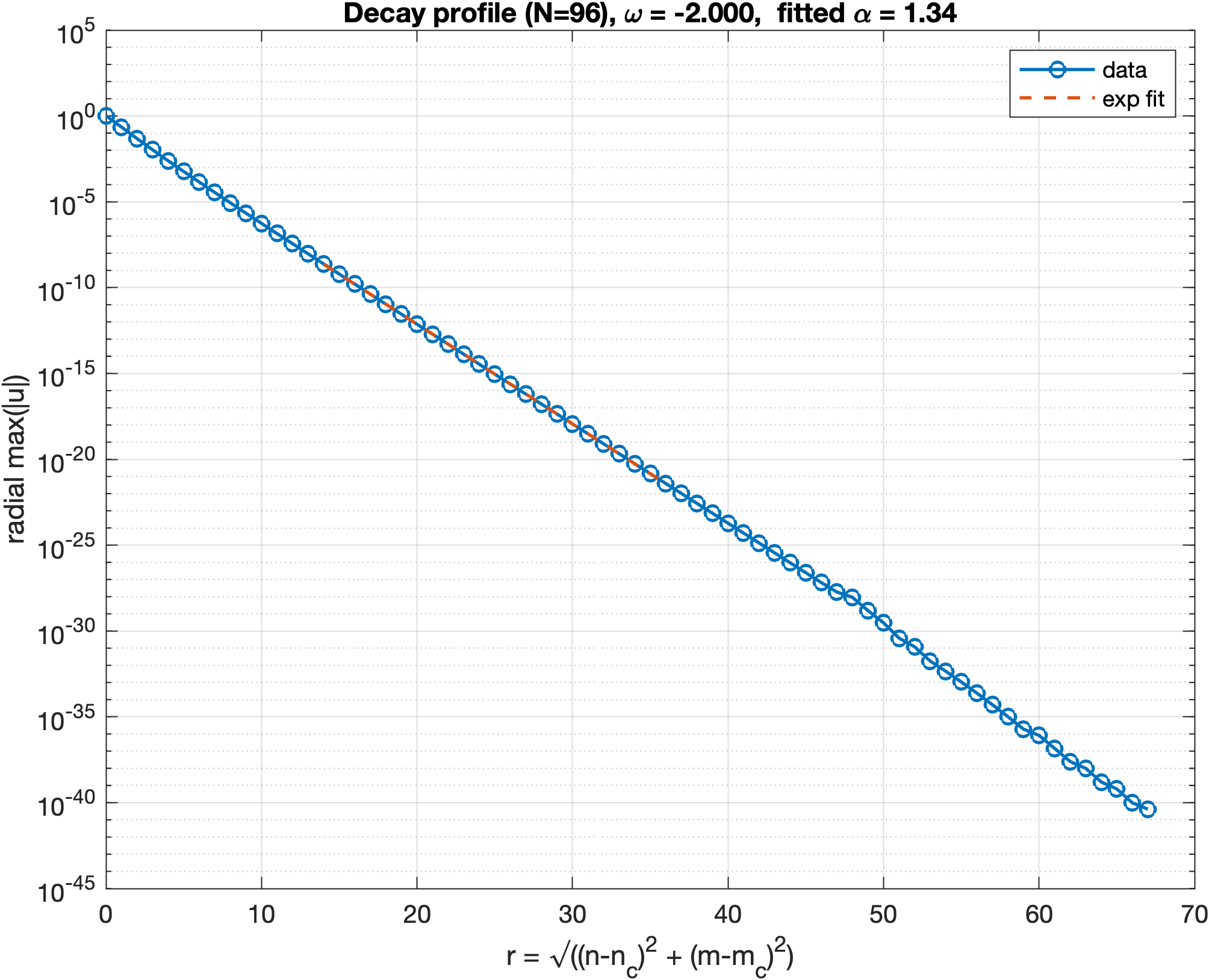}\hfill
\includegraphics[width=0.32\textwidth]{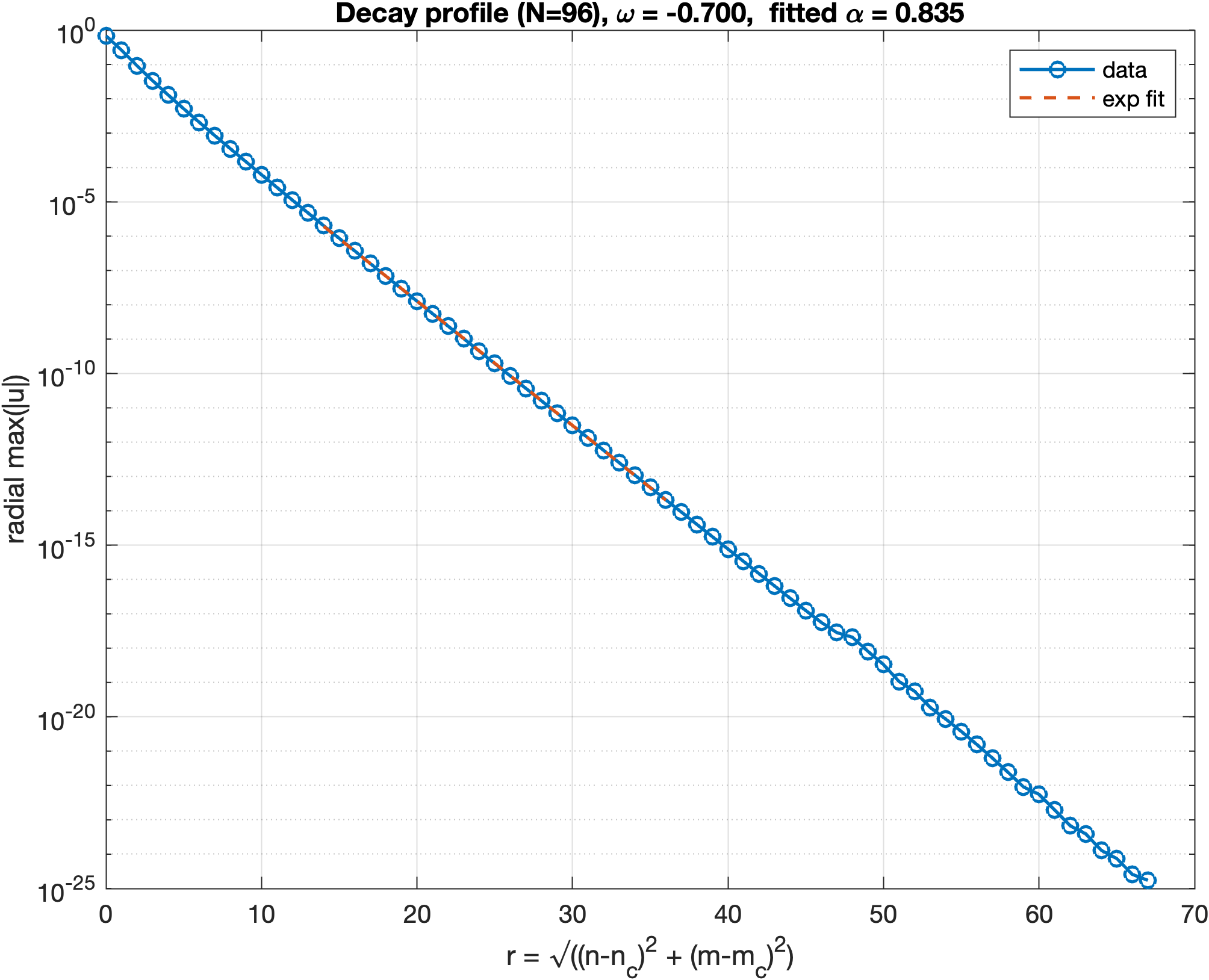}\hfill
\includegraphics[width=0.32\textwidth]{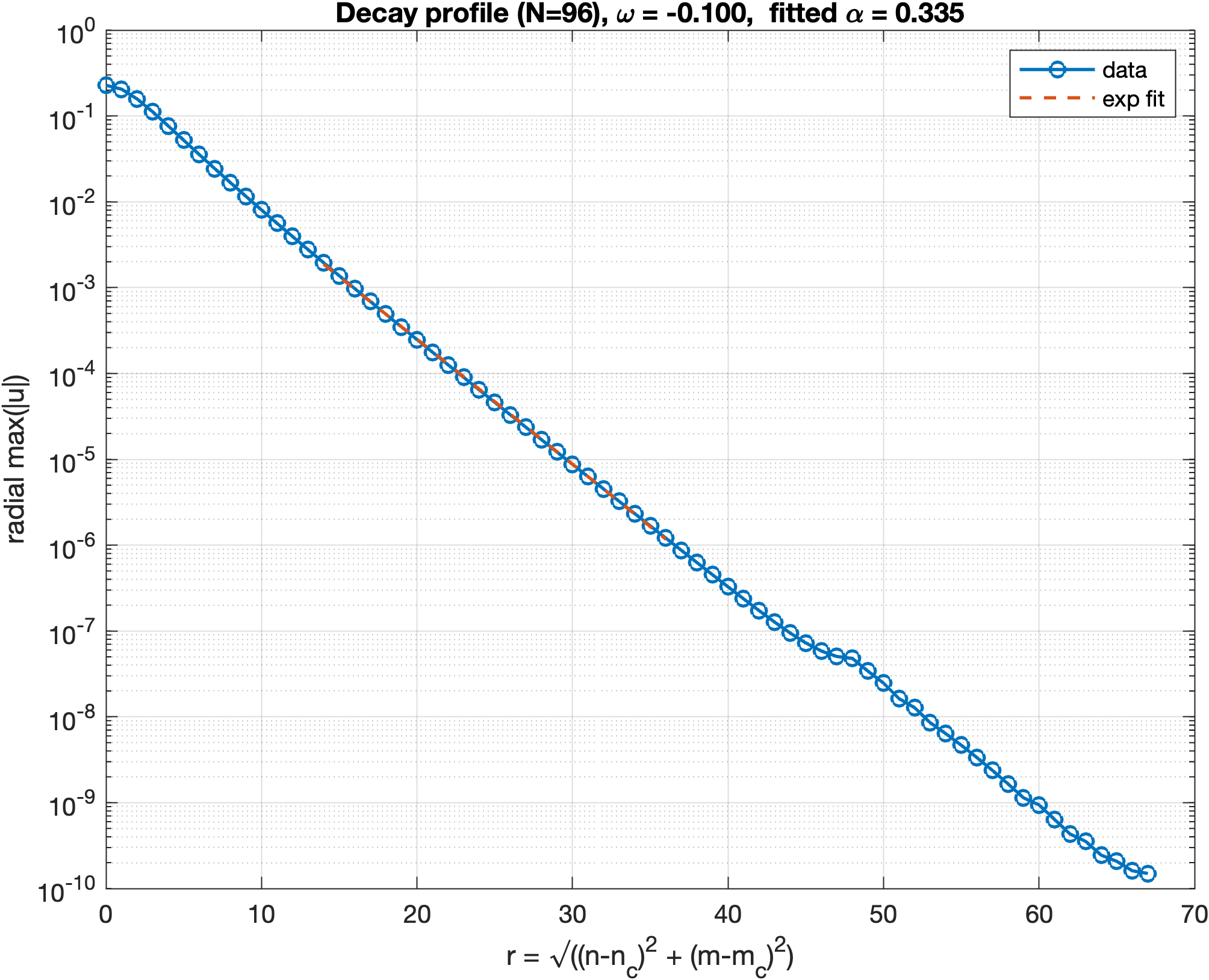}
\caption{Radial decay profiles of centered ground states for
$\omega=-2.0$, $\omega=-0.7$, and $\omega=-0.1$.
Profiles are plotted on a logarithmic scale; dashed lines indicate
least--squares exponential fits in the far field.}
\label{fig:decay_profiles}
\end{figure}
\FloatBarrier

\medskip
\paragraph{Remark on uniqueness.}
Although a complete uniqueness theory for the localized ground states
is beyond the scope of the present work, the numerical computations
suggest that the computed ground state profile is unique up to lattice
translations and the sign symmetry of the equation. In all numerical
experiments performed, the variational minimization consistently
produces a single dominant localized peak after appropriate lattice
translations.

Finally, we investigate the convergence of periodic ground states
toward the infinite--lattice solution using additional diagnostics.
Figure~\ref{fig:SC_diagnostics} shows three indicators as functions of
the lattice period $N$: the Nehari residual, the relative $\ell^2$
error with respect to a reference solution, and the relative mismatch
of the nonlinear term. All quantities decay rapidly as $N$ increases,
confirming that the periodic approximations converge strongly to a
localized ground state and that boundary effects become negligible once
the lattice size exceeds the intrinsic localization length of the
solution. These diagnostics provide a numerical counterpart of the
strong convergence result established in
Theorem~\ref{thm:strong_convergence}.

\begin{figure}[!t]
\centering
\begin{subfigure}[t]{0.32\textwidth}
\centering
\includegraphics[width=\textwidth]{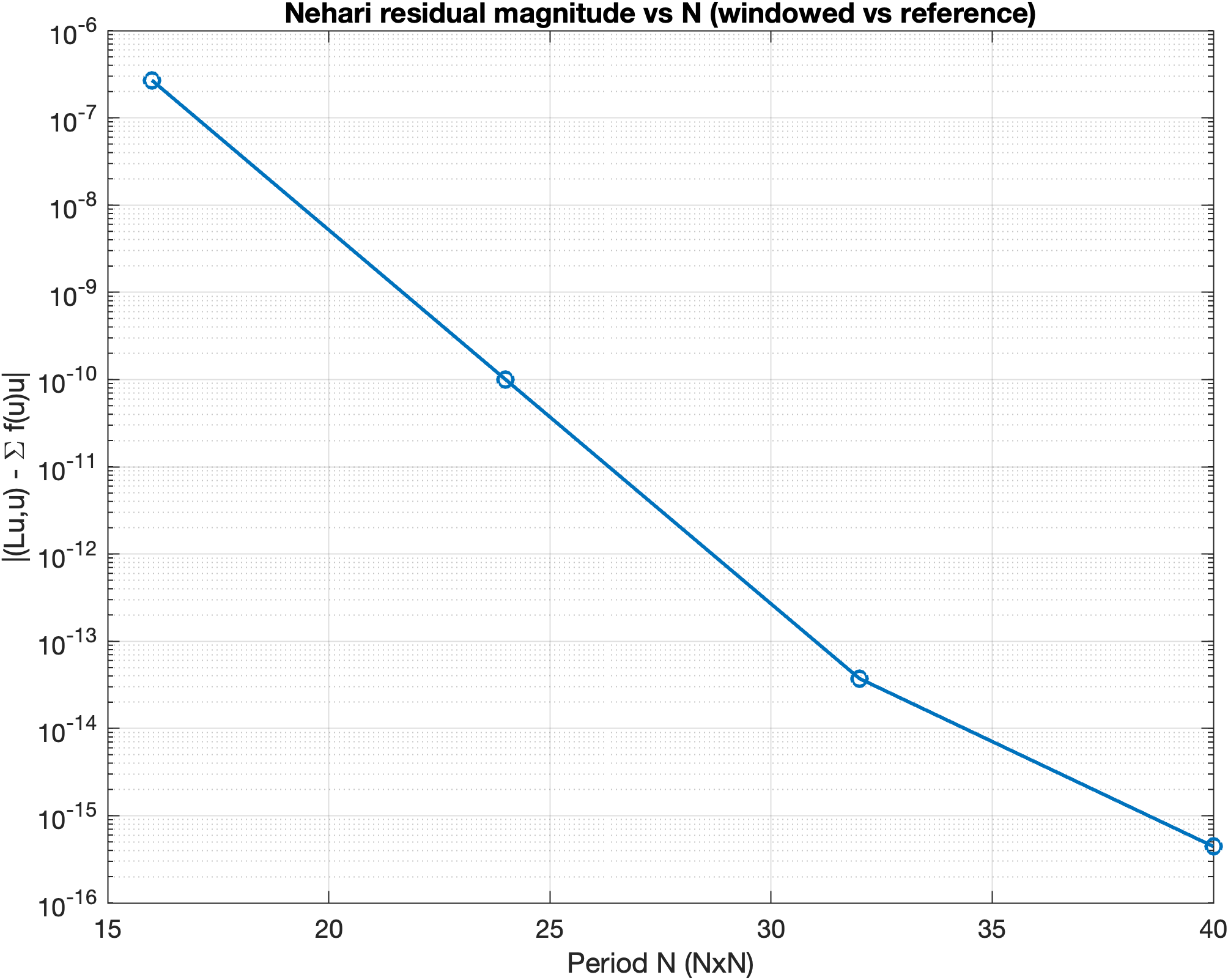}
\caption{Nehari residual}
\label{fig:nehari_residual}
\end{subfigure}\hfill
\begin{subfigure}[t]{0.32\textwidth}
\centering
\includegraphics[width=\textwidth]{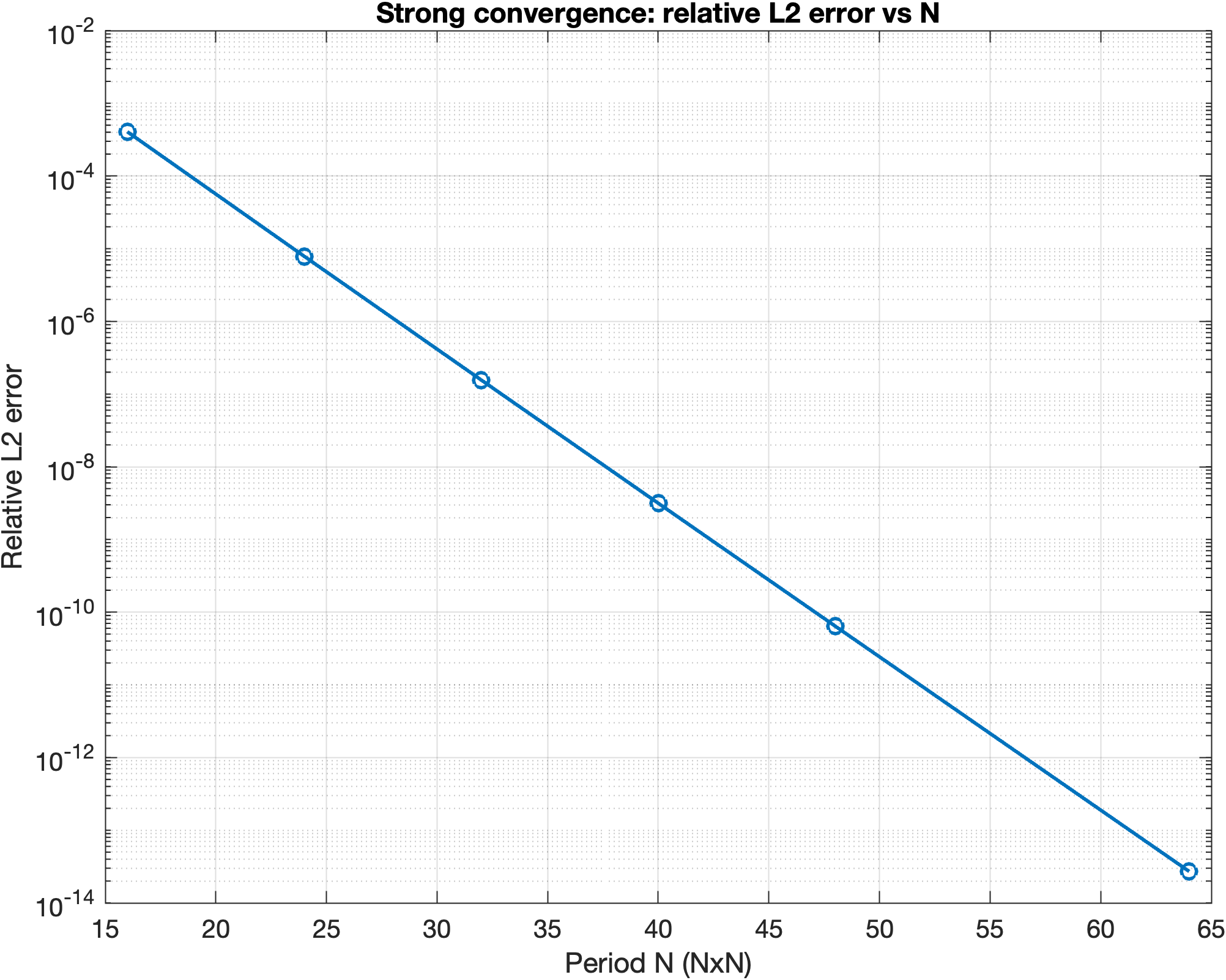}
\caption{Relative $\ell^2$ error}
\label{fig:strong_L2}
\end{subfigure}\hfill
\begin{subfigure}[t]{0.32\textwidth}
\centering
\includegraphics[width=\textwidth]{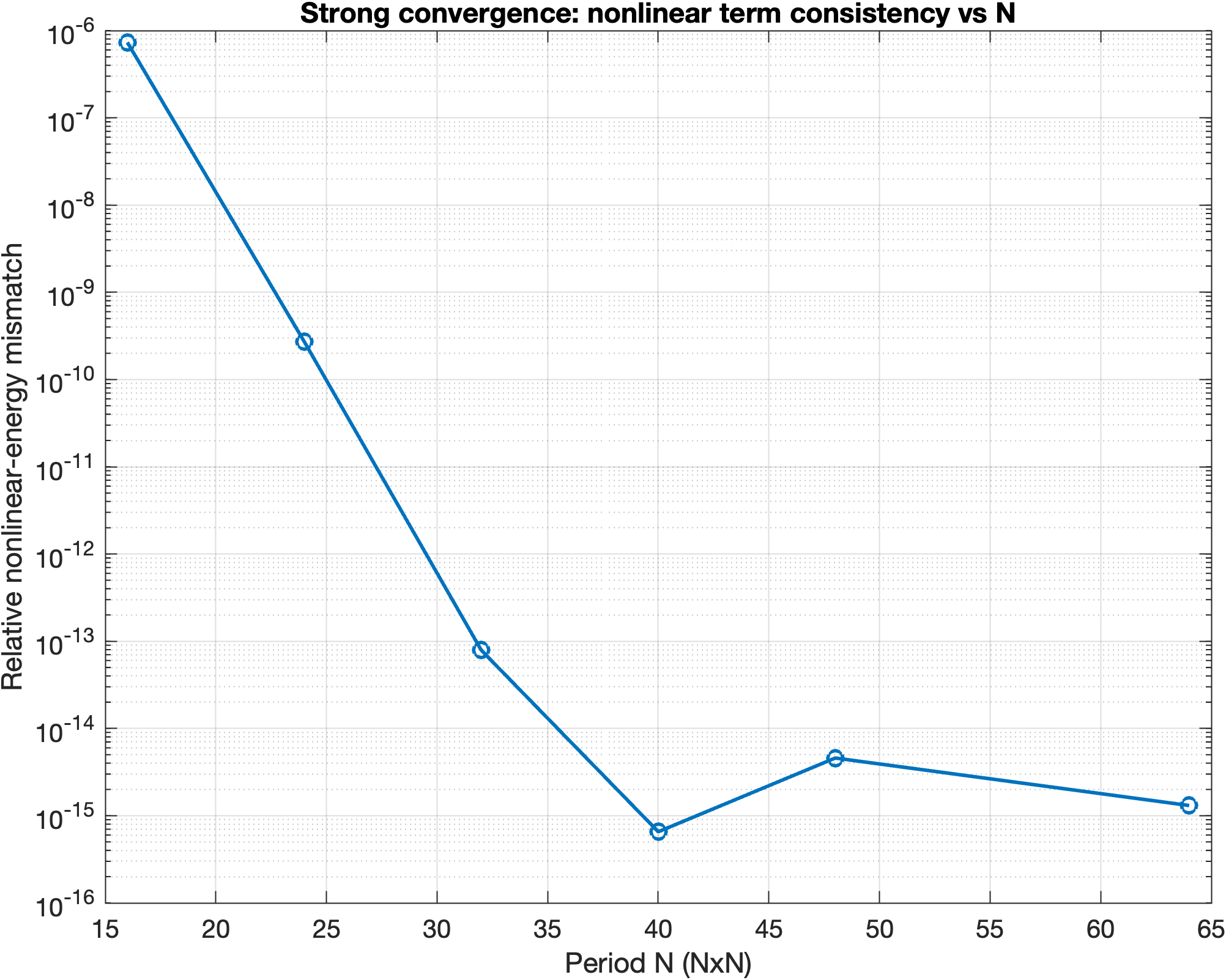}
\caption{Nonlinear mismatch}
\label{fig:nonlinear_consistency}
\end{subfigure}
\caption{Diagnostics for the periodic-to-localized limit.
Left: Nehari residual versus period $N$.
Middle: relative $\ell^2$ error versus $N$.
Right: relative mismatch of the nonlinear term versus $N$.}
\label{fig:SC_diagnostics}
\end{figure}
\FloatBarrier

Taken together, the numerical experiments provide a computational
illustration of the analytical theory developed in this work.
They confirm the existence of strongly localized ground states,
their exponential spatial decay, and the convergence of periodic
ground states toward the localized infinite--lattice profile.
The numerical results are therefore consistent with
Theorems~\ref{thm:periodic_ground_states}--
\ref{thm:strong_convergence}.
\section{Conclusions}\label{sec11}

In this work we developed a variational framework for the
two--dimensional discrete nonlinear Schr\"odinger equation with
saturable nonlinearity on the lattice $\mathbb Z^2$.
The analysis extends the one--dimensional theory of
Pankov and Rothos to a genuinely two--dimensional setting,
where the larger spectral band of the discrete Laplacian and
the lack of compactness associated with lattice translations
require additional arguments.

Using a Nehari manifold approach, we proved the existence of
nontrivial periodic ground states on finite
$(k_1,k_2)$--periodic lattices and established the existence of
spatially localized ground states on the infinite lattice
$\mathbb Z^2$.
The localized solutions were shown to decay exponentially. 
A central result of the paper is the rigorous connection between
the finite and infinite lattice problems.
We proved that, up to lattice translations, periodic ground
states converge strongly in $\ell^2(\mathbb Z^2)$ to a localized
ground state as the lattice periods tend to infinity.
The proof combines variational methods, concentration--compactness
arguments, and a Brezis--Lieb type decomposition which excludes
energy splitting and guarantees convergence to a single localized
profile.
We also placed the resulting standing waves within the classical
Grillakis--Shatah--Strauss framework by deriving a conditional
orbital stability result under the standard spectral and slope
assumptions. This provides a natural connection between the
variational characterization of ground states and their dynamical
behavior under the DNLS flow.
The analytical results were complemented by numerical computations
which illustrate the single--bump structure of the ground states,
their exponential localization, and the convergence of periodic
approximations toward the localized infinite--lattice profile.
The numerical observations are in full agreement with the
theoretical predictions.

The approach developed here combines variational methods,
spectral analysis, and concentration--compactness techniques in a
form that is applicable to a broader class of discrete nonlinear
equations with asymptotically linear nonlinearities.
Possible directions for future work include the study of
higher--dimensional lattices, more general saturable nonlinearities,
and the existence and stability of multi--site or vortex--type
localized states.

\medskip
\noindent\textbf{Data availability.}
No experimental data were generated for this study. The numerical
simulations were performed for illustrative purposes and the corresponding
data are available from the author upon request.

\bibliographystyle{plain}
\bibliography{references_2D-DNLS}

\end{document}